\newtheorem{theorem}{Theorem}[section]
\newtheorem{lemma}[theorem]{Lemma}
\newtheorem{rem}[theorem]{Remark}
\newtheorem{corollary}[theorem]{Corollary}
\newtheorem*{assume*}{Assumptions}
\theoremstyle{definition}
\newtheorem{definition}[theorem]{Definition}
\theoremstyle{remark}
\numberwithin{equation}{section}
\begin{document}

\title[The Dixmier-Moeglin equivalence]{The Dixmier-Moeglin equivalence for extensions of scalars and Ore extensions}







\title[The Dixmier-Moeglin equivalence]{The Dixmier-Moeglin equivalence for extensions of scalars and Ore extensions}
\subjclass[2010]{Primary 16A05, 16A20, Secondary 16A33}
\keywords{Primitive ideals, Nullstellensatz, base change, Ore extensions, Dixmier-Moeglin equivalence}

\author{Jason P. Bell}
\author{Kaiyu Wu}
\author{Shelley Wu}
\thanks{The research of the first-named author  was supported by NSERC Grant 326532-2011; the second- and third-named authors were supported by an NSERC USRA award.}
\address{ Department of Mathematics, University of Waterloo, 
Waterloo, ON, N2L  3G1, CANADA}
\email{jpbell@uwaterloo.ca}
\email{cfrwu@uwaterloo.ca}
\email{k29wu@uwaterloo.ca}

\dedicatory{Dedicated to Don Passman on the occasion of his 75th birthday.}



\begin{abstract} 
An algebra $A$ satisfies the Dixmier-Moeglin equivalence if we have the equivalences: $$P~{\rm primitive}\iff P~{\rm rational}\iff P ~{\rm locally~closed~}\qquad~{\rm for}~P\in {\rm Spec}(A).$$ We study the robustness of the Dixmier-Moeglin equivalence under extension of scalars and under the formation of Ore extensions.  In particular, we show that the Dixmier-Moeglin equivalence is preserved under base change for finitely generated complex noetherian algebras. We also study Ore extensions of finitely generated complex noetherian algebras $A$. If $T:A\to A$ is either a $\mathbb{C}$-algebra automorphism or a $\mathbb{C}$-linear derivation of $A$, we say that $T$ is \emph{frame-preserving} if there exists a finite-dimensional subspace $V\subseteq A$ that generates $A$ as an algebra such that $T(V)\subseteq V$.  We show that if $A$ is of finite Gelfand-Kirillov dimension and has the property that all prime ideals of $A$ are completely prime and $A$ satisfies the Dixmier-Moeglin equivalence then the Ore extension $A[x;T]$ satisfies the Dixmier-Moeglin equivalence whenever $T$ is a frame-preserving derivation or automorphism. 
\end{abstract}


\maketitle

%
%
%
%
\section{Introduction} 
The Dixmier-Moeglin equivalence is an important result in the representation theory of enveloping algebras of finite-dimensional Lie algebras that has since been extended to many other settings.   At its core, the motivation for this equivalence lies in trying to understand the irreducible representations of an algebra---this is often a very difficult problem.  As a means to deal with such thorny representation-theoretic issues, Dixmier proposed that one should instead find a ``coarser'' understanding by finding the annihilators of simple modules---these are the so-called primitive ideals and they form a subset of the prime spectrum.  This approach has been used successfully in the study of enveloping algebras and their representations. A concrete characterization of the primitive ideals of enveloping algebras of finite-dimensional complex Lie algebras was given by Dixmier \cite{Dixmier} and Moeglin \cite{Moeglin}.  In this case, we have the following equivalence.
\begin{theorem} Let $\mathcal{L}$ be a finite-dimensional complex Lie algebra and let $P$ be a prime in ${\rm Spec}(U(\mathcal{L}))$.  Then the following are equivalent:
\begin{enumerate}
\item[$(1)$] $P$ is primitive;
\item[$(2)$] $\{P\}$ is locally closed in ${\rm Spec}(R)$;
\item[$(3)$] $P$ is rational.
\end{enumerate}
\end{theorem}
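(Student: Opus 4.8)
The plan is to separate the three implications by flavor: $(2)\Rightarrow(1)$ and $(1)\Rightarrow(3)$ are formal and hold for any affine $\mathbb{C}$-algebra, whereas $(3)\Rightarrow(2)$ is the arithmetic heart of the statement and is where the structure theory of finite-dimensional Lie algebras enters.

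For $(2)\Rightarrow(1)$ I would first recall that $U(\mathcal{L})$ is a Jacobson ring, being noetherian and affine over a field, so every prime is the intersection of the primitive ideals containing it. If $\{P\}$ is locally closed then $P$ is strictly contained in the intersection $J$ of all primes properly containing $P$; writing $P=\bigcap_i M_i$ with each $M_i$ primitive and $M_i\supseteq P$, every $M_i$ with $M_i\supsetneq P$ contains $J\supsetneq P$, so the intersection can equal $P$ only if some $M_i$ equals $P$, that is, $P$ is primitive. For $(1)\Rightarrow(3)$ I would invoke the Nullstellensatz for enveloping algebras: for every simple $U(\mathcal{L})$-module $M$ the division ring $\mathrm{End}_{U(\mathcal{L})}(M)$ is algebraic over $\mathbb{C}$, hence equals $\mathbb{C}$. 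If $P=\mathrm{ann}(M)$, a standard localization argument then shows that the centre of the Goldie quotient ring of $U(\mathcal{L})/P$ is algebraic over $\mathbb{C}$, and since $\mathbb{C}$ is algebraically closed it equals $\mathbb{C}$, so $P$ is rational.

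The real content is $(3)\Rightarrow(2)$, which I would prove by induction on $\dim\mathcal{L}$ following the architecture of Dixmier's orbit method and Duflo's description of the primitive spectrum. The base case is $\mathcal{L}$ nilpotent: there every prime is completely prime, the Dixmier map $f\mapsto I(f)$ induces a homeomorphism from the space of coadjoint orbits in $\mathcal{L}^*$ onto $\mathrm{Prim}(U(\mathcal{L}))$, and each primitive quotient $U(\mathcal{L})/I(f)$ is a simple Weyl algebra over $\mathbb{C}$, so every primitive ideal is maximal and hence trivially locally closed; only $(3)\Rightarrow(1)$ needs an argument, and for it one shows, again by the filtration and localization techniques underlying the orbit method, that a non-primitive prime $P$ has Goldie quotient a Weyl algebra over a purely transcendental extension of $\mathbb{C}$ of positive transcendence degree, so its heart is strictly larger than $\mathbb{C}$ and $P$ is not rational. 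For $\mathcal{L}$ solvable one runs the same circle of ideas while carrying along the bookkeeping of polarizations and stabilizers (Conze, Duflo, Rentschler): primitive ideals are no longer maximal, but Moeglin's argument still extracts from the explicit parametrization an ideal lying strictly between $P$ and the intersection of the primes above it. For general $\mathcal{L}$ I would invoke the Levi decomposition $\mathcal{L}=\mathfrak{r}\rtimes\mathfrak{s}$ to reduce the rationality and local-closedness bookkeeping to the solvable radical together with the semisimple part, on which Duflo's parametrization of $\mathrm{Prim}(U(\mathfrak{s}))$ by highest-weight and central data makes the equivalence checkable directly.

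The main obstacle is precisely this last direction in full generality. Unlike the other two implications, $(3)\Rightarrow(2)$ fails for arbitrary noetherian algebras — indeed, making it robust is the entire motivation of the present paper — so the proof cannot be purely ring-theoretic. The genuinely hard pieces are Moeglin's theorem that primitivity forces local closedness, whose non-solvable case requires delicate control of how annihilators transform under the semidirect-product and induction constructions, together with the verification that a non-primitive prime always picks up extra transcendence in its heart; both rest on fine features of the Dixmier map and of enveloping algebras in characteristic zero.
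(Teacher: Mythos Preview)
The paper does not prove this theorem. It is stated in the introduction as background, with the attribution ``given by Dixmier \cite{Dixmier} and Moeglin \cite{Moeglin}'', and no argument is supplied; the paper's own results (Theorems~\ref{theorem: main} and~\ref{theorem: main2}) concern base change and Ore extensions, not enveloping algebras. So there is nothing in the paper to compare your attempt against.

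On the merits of your proposal itself: the two easy implications are handled correctly and are indeed the general facts the paper records just after Definition~\ref{definition: DM} (locally closed $\Rightarrow$ primitive $\Rightarrow$ rational for algebras satisfying the Nullstellensatz). For the hard direction, however, what you have written is closer to a roadmap through the literature than a proof. In the nilpotent and solvable cases you invoke the Dixmier map, polarizations, and ``Moeglin's argument'' as black boxes, and in the general case you appeal to Duflo's parametrization and again to ``Moeglin's theorem that primitivity forces local closedness''. But Moeglin's theorem \emph{is} the implication you are trying to establish, so citing it is circular; and the reduction you sketch via the Levi decomposition is not how the actual proof proceeds --- the semisimple case is not handled by a direct check on highest-weight data independently of the rest, and the passage from solvable radical to general $\mathcal{L}$ requires substantial work on induced ideals and the behaviour of hearts under Mackey-type constructions that your outline does not supply. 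If you intend this as an honest proof rather than a pointer to \cite{Dixmier} and \cite{Moeglin}, the missing idea is precisely the mechanism by which rationality of $P$ produces an element (or ideal) separating $P$ from the primes strictly above it; none of the steps you list actually furnishes that element.
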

We recall, for the reader's benefit, that a subset $V$ of a topological space $Z$ is \emph{locally closed} if we have $V=C_1\setminus C_2$  for some closed subsets $C_1$ and $C_2$ of $Z$; as stated earlier, a prime ideal $P$ in a ring $R$ is (left) \emph{primitive} if $P$ is the annihilator of some simple left $R$-module.   Finally, a prime ideal $P$ of a noetherian $k$-algebra $R$ is \emph{rational} if the centre of the Artinian ring of quotients of $A/P$, which we denote $Q(A/P)$, is an algebraic extension of the base field.  We let $Z(Q(A/P))$ denote the centre of this ring of quotients and we call it the \emph{extended centre} of $A/P$.
 We point out that the Dixmier-Moeglin equivalence gives both a topological and a purely algebraic characterization of the primitive ideals of an algebra. 

A noetherian algebra for which properties $(1)$--$(3)$ are equivalent for all prime ideals of the algebra is said to satisfy the \emph{Dixmier-Moeglin equivalence}.  More precisely, we have the following definition.  

\begin{definition} Let $R$ be a noetherian algebra.  We say that $R$ satisfies the \emph{Dixmier-Moeglin equivalence} if for every $P\in {\rm Spec}(R)$, the following properties are equivalent:
\begin{enumerate}
\item[($A$)] $P$ is primitive;
\item[($B$)] $P$ is rational;
\item[($C$)] $\{P\}$ is locally closed in ${\rm Spec}(R)$.
\end{enumerate}
\label{definition: DM}
\end{definition}

The Dixmier-Moeglin equivalence has been shown to hold for many classes of algebras, including many quantized coordinate rings and quantized enveloping algebras, algebras that satisfy a polynomial identity, and many algebras that come from noncommutative projective geometry \cite{GL, BG, Procesi, BRS}.  In general, there are many examples of algebras for which the Dixmier-Moeglin equivalence does not  hold.  For example, Lorenz \cite{Lor1} constructed a polycyclic-by-finite group whose complex group algebra is primitive but with the property that $(0)$ is not locally closed in the prime spectrum.  While the Dixmier-Moeglin equivalence need not hold in general, one always has the implications $(C)\implies (A)\implies (B)$ for algebras satisfying the Nullstellensatz (we recall that a $k$-algebra $A$ satisfies the \emph{Nullstellensatz} if every prime ideal is the intersection of the primitive ideals above it and the endomorphism ring of every simple module is algebraic over the base field); in particular, this holds for countably generated noetherian algebras over an uncountable base field (cf. \cite[II.7.16]{BG} and \cite[Prop. 6]{Lor2}).

The focus of this paper is to study the robustness of the Dixmier-Moeglin equivalence under extension of scalars and under Ore extensions.  We recall for the reader's benefit that if $k$ is a field and $A$ is a $k$-algebra then an \emph{Ore extension} $A[x;\sigma,\delta]$ is formed by taking a $k$-linear automorphism $\sigma$ of $A$ and a $k$-linear map $\delta:A \to A$, called a $\sigma$-derivation, which satisfies
$\delta(ab)=\sigma(a) \delta(b)+\delta(a)b$, and then forming an algebra by taking the polynomial ring $A[x]$ and giving it a new multiplication in which multiplication of elements of $A$ is just the ordinary product in $A$ and multiplication by $x$ is given by 
$$x a = \sigma(a) x + \delta(a)\qquad {\rm for~}a\in A.$$

In the case when $\sigma$ is the identity, then $\delta$ is just a derivation of the algebra and we suppress $\sigma$ and write $A[x;\delta]$; in the case when $\delta$ is zero, we suppress $\delta$ and write $A[x;\sigma]$.  

In general, it is known that the Dixmier-Moeglin equivalence is not necessarily preserved under typical extensions used in ring theory. For Ore extensions, an example of Lorenz gives a Laurent polynomial ring $\mathbb{C}[x^{\pm 1},y^{\pm 1}]$ with an automorphism $\sigma$ such that the algebra
$\mathbb{C}[x^{\pm 1},y^{\pm 1}][t,t^{-1};\sigma]$ does not satisfy the Dixmier-Moeglin equivalence (Lorenz' example is stated in the language of group algebras of polycyclic groups). On the other hand, there is a recent example of a finitely generated commutative complex domain with a derivation $\delta$ such that $A[x;\delta]$ does not satisfy the Dixmier-Moeglin equivalence \cite{BLLM}.  
Our main result for extension of scalars shows that for uncountable algebraically closed base fields of characteristic zero, the Dixmier-Moeglin equivalence is preserved under extension of scalars for finitely generated algebras (see Theorem \ref{theorem: extension} for a slightly more general statement). 
\begin{theorem}
Let $A$ be a finitely generated complex noetherian algebra and suppose that $A$ satisfies the Dixmier-Moeglin equivalence.  Then $A\otimes_{\mathbb{C}} F$ satisfies the Dixmier-Moeglin equivalence for all extensions $F$ of $\mathbb{C}$.
\label{theorem: main}
\end{theorem}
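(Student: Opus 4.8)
The plan is to dispose of the implications $(C)\Rightarrow(A)\Rightarrow(B)$ once and for all and then concentrate on $(B)\Rightarrow(C)$, which is the only one that can fail. First note that $A\otimes_{\mathbb{C}}F$ is again noetherian and, since $A$ is affine over $\mathbb{C}$, is a finitely generated algebra over the uncountable field $F$; hence it satisfies the Nullstellensatz, so $(C)\Rightarrow(A)\Rightarrow(B)$ hold and it suffices to prove that every rational prime of $A\otimes_{\mathbb{C}}F$ is locally closed. I would then make two reductions. If $P$ is rational in $A\otimes_{\mathbb{C}}F$, then each of the finitely many primes $Q$ of $A\otimes_{\mathbb{C}}\overline{F}$ lying over $P$ is again rational (in characteristic zero $Q((A\otimes_{\mathbb{C}}F)/P)\otimes_{F}\overline{F}$ is a finite product of matrix rings over $\overline{F}$), and a finite-fibre closed-map argument for ${\rm Spec}(A\otimes_{\mathbb{C}}\overline{F})\to{\rm Spec}(A\otimes_{\mathbb{C}}F)$ then deduces local closedness of $\{P\}$ from that of the $\{Q\}$; so we may assume $F$ is algebraically closed. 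Next, putting $\mathfrak{p}=P\cap A$ (a prime of $A$, since $F$ is central over $A$), the Dixmier--Moeglin equivalence passes to the prime quotient $A/\mathfrak{p}$, and local closedness in the closed subset ${\rm Spec}((A/\mathfrak{p})\otimes_{\mathbb{C}}F)$ implies it in ${\rm Spec}(A\otimes_{\mathbb{C}}F)$, so we may assume in addition that $A$ is prime and $P\cap A=0$.

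The heart of the argument is an analysis of the generic fibre. With $A$ prime, let $S$ be its set of regular elements; then $S^{-1}(A\otimes_{\mathbb{C}}F)=Q(A)\otimes_{\mathbb{C}}F=M_{n}(D\otimes_{\mathbb{C}}F)$, where $Q(A)=M_{n}(D)$ and $D\otimes_{\mathbb{C}}F=D\otimes_{E}(E\otimes_{\mathbb{C}}F)$ with $E=Z(Q(A))$ a finitely generated field extension of $\mathbb{C}$ and $[D:E]<\infty$. Since $E\otimes_{\mathbb{C}}F$ is a commutative noetherian domain over which $D\otimes_{\mathbb{C}}F$ is module-finite, the prime $P$ (lying in the generic fibre) is governed by a prime $\mathfrak{q}$ of $E\otimes_{\mathbb{C}}F$; computing extended centres and using that $\mathbb{C}$ and $F$ are algebraically closed of characteristic zero, one finds that $P$ is rational exactly when $\mathfrak{q}$ is a maximal ideal with residue field $F$, equivalently when $P$ arises from a $\mathbb{C}$-algebra embedding $\iota\colon E\hookrightarrow F$. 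Choosing a finitely generated intermediate field $\mathbb{C}\subseteq F_{0}\subseteq F$ with $\iota(E)\subseteq F_{0}$, this embedding, and hence $P$, spreads out: $P=P_{0}\,(A\otimes_{\mathbb{C}}F)$ for a rational prime $P_{0}$ of $A\otimes_{\mathbb{C}}F_{0}$ with $P_{0}\cap A=0$, and the Goldie quotient ring of $\overline{R}_{0}:=(A\otimes_{\mathbb{C}}F_{0})/P_{0}$ is $M_{N}(\Delta_{0})$ for a central simple $F_{0}$-algebra $\Delta_{0}$.

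It remains to handle $A\otimes_{\mathbb{C}}F_{0}$ for $F_{0}$ finitely generated over $\mathbb{C}$ and then to transfer local closedness back up to $F$. For the finitely generated case I would induct on the transcendence degree of $F_{0}$: writing $F_{0}$ as a finite extension of $\mathbb{C}(t_{1},\dots,t_{n})$, one obtains $A\otimes_{\mathbb{C}}\mathbb{C}(t_{1},\dots,t_{n})$ as an iterated central polynomial extension of $A$ followed by a central localization, and one uses that the Dixmier--Moeglin equivalence is inherited by a central polynomial extension $R\mapsto R[t]$ and by localizations, together with the finite-field-extension case already treated to pass from $\mathbb{C}(t_{1},\dots,t_{n})$ up to $F_{0}$. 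Granting this, $P_{0}$ is locally closed in ${\rm Spec}(A\otimes_{\mathbb{C}}F_{0})$, i.e. the intersection $J_{0}$ of the nonzero primes of $\overline{R}_{0}$ is nonzero. Finally, since $F$ is algebraically closed it splits $\Delta_{0}$, so the generic fibre of ${\rm Spec}\big((A\otimes_{\mathbb{C}}F)/P\big)\to{\rm Spec}\,\overline{R}_{0}$ is ${\rm Spec}\big(M_{N}(\Delta_{0})\otimes_{F_{0}}F\big)={\rm Spec}(M_{N'}(F))$, a single point. Hence every nonzero prime of $(A\otimes_{\mathbb{C}}F)/P=\overline{R}_{0}\otimes_{F_{0}}F$ contracts to a nonzero prime of $\overline{R}_{0}$ and therefore contains $J_{0}$; so the intersection of the nonzero primes of $(A\otimes_{\mathbb{C}}F)/P$ contains $J_{0}\otimes_{F_{0}}F\neq 0$, which is precisely the statement that $\{P\}$ is locally closed in ${\rm Spec}(A\otimes_{\mathbb{C}}F)$.

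The step I expect to be the main obstacle is the input used in the finitely generated case: that a central polynomial extension $R\mapsto R[t]$ of an affine noetherian $\mathbb{C}$-algebra inherits the Dixmier--Moeglin equivalence. This, together with the finite-base-change lemma, carries most of the weight; as an alternative one could try to treat the transcendental case directly by a generic-fibre analysis over $A$ itself, where the uncountability of $\mathbb{C}$ would enter through an ``all but countably many specializations'' argument. The remaining work --- tracking Goldie quotient rings, extended centres and central simple algebras through the various base changes, and checking that the spreading-out in the generic fibre can be carried out over a single finitely generated subfield of $F$ --- is routine but needs some care.
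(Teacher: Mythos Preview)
Your reductions and the generic-fibre bookkeeping are sensible, but the argument has a real gap exactly where you flag it: the assertion that a central polynomial extension $R\mapsto R[t]$ of an affine noetherian $\mathbb{C}$-algebra preserves the Dixmier--Moeglin equivalence. This is not an available input---there is no such theorem in the literature in that generality, and it is comparable in difficulty to the first nontrivial case of what you are proving. (The paper does later establish that certain Ore extensions of $R$, including $R[t]$, inherit the equivalence, but only under the extra hypotheses of finite GK dimension and all primes completely prime, and that proof \emph{uses} the base-change theorem you are trying to establish.) Your induction on transcendence degree therefore has no base case, and the proposal as written does not close.

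The paper takes an entirely different route and never touches polynomial extensions. The key lemma is purely field-theoretic: if $k$ is uncountable and algebraically closed and $\dim_k A<|k|$, then for any algebraically closed extension $F$ of $k$ with $|F|=|k|$ there is a ring isomorphism $A\otimes_k F\cong A$ carrying $k$ onto $F$ (this comes down to the fact that two algebraically closed fields of the same characteristic with the same uncountable transcendence degree over a common small subfield are isomorphic). With this in hand the argument is short: if $A\otimes_{\mathbb{C}}F$ failed $(B)\Rightarrow(C)$, one first descends the witnessing rational-but-not-locally-closed prime, together with countably many primes above it, to a countably generated subextension $F_0/\mathbb{C}$; then $|\overline{F_0}|=|\mathbb{C}|$, so $A\otimes_{\mathbb{C}}\overline{F_0}\cong A$ and hence satisfies the Dixmier--Moeglin equivalence by hypothesis; Irving--Small descent (your finite-fibre closed-map step) then pushes this down to $A\otimes_{\mathbb{C}}F_0$, a contradiction. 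Thus the uncountability of $\mathbb{C}$ enters through a single ring isomorphism rather than through any inductive or specialization scheme.
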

One of the nice consequences of this result is that it shows that if $A$ is a finitely generated complex noetherian algebra that satisfies the Dixmier-Moeglin equivalence then, up to finitely many exceptions, the height one primes of $A$ are parametrized by the extended centre in some natural sense.  This agrees with the case of many quantized coordinate rings, where Goodearl and Letzter showed in this setting that after inverting a normal element (i.e., removing a finite set of height one primes and the primes that contain them), the prime spectrum of the resulting algebra is homeomorphic to the prime spectrum of its centre, with the homeomorphisms coming from expansion and contraction of prime ideals.  More precisely, we have the following result.
\begin{corollary} Let $A$ be a finitely generated complex noetherian algebra and suppose that $A$ satisfies the Dixmier-Moeglin equivalence.  If $P$ is a prime ideal of $A$ and $B=A/P$ then $ZB$ has finitely many height one primes, where $ZB$ is the subalgebra of $Q(B)$ generated by $B$ and the centre, $Z$, of $Q(B)$.  
\label{corollary: main}
\end{corollary}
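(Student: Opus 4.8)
The plan is to obtain Corollary~\ref{corollary: main} from Theorem~\ref{theorem: main} by base-changing $B=A/P$ to its own extended centre: passing to $ZB$ turns the generic point into a \emph{rational} point, at which the Dixmier--Moeglin equivalence forces local closedness, and local closedness of the generic point is precisely a finiteness statement for height one primes.

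First I would record the routine fact that the Dixmier--Moeglin equivalence descends to quotients: if $R$ is noetherian and satisfies it and $I$ is a two-sided ideal, then so does $R/I$. Indeed, for a prime $\mathfrak q\supseteq I$ all three conditions of Definition~\ref{definition: DM} for $\mathfrak q/I$ in $R/I$ agree with those for $\mathfrak q$ in $R$: primitivity, since the simple $R/I$-modules are precisely the simple $R$-modules annihilated by $I$ and such a module with annihilator $\mathfrak q$ is automatically killed by $I\subseteq\mathfrak q$; rationality, since $(R/I)/(\mathfrak q/I)=R/\mathfrak q$ and the extended centre only depends on this quotient; and local closedness, since $\mathrm{Spec}(R/I)$ is the closed subspace $V(I)$ of $\mathrm{Spec}(R)$, so a point is locally closed in one iff it is in the other. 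In particular $B=A/P$ satisfies the Dixmier--Moeglin equivalence.

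Next, put $Z=Z(Q(B))$ and apply Theorem~\ref{theorem: main} with $F=Z$: the $Z$-algebra $B\otimes_{\mathbb C}Z$ is noetherian and satisfies the Dixmier--Moeglin equivalence over $Z$. The multiplication map $\mu\colon B\otimes_{\mathbb C}Z\to Q(B)$, $b\otimes z\mapsto bz$, is a $Z$-algebra homomorphism (because $Z$ is central in $Q(B)$) whose image is exactly the subalgebra $ZB$; hence $ZB\cong (B\otimes_{\mathbb C}Z)/\ker\mu$ is noetherian and, by the previous paragraph, satisfies the Dixmier--Moeglin equivalence over $Z$. A short localization argument identifies the Goldie quotient ring: every regular element of $B$ is a unit of $Q(B)$, so it remains regular in $ZB$, and conversely a regular element of $ZB\subseteq Q(B)$ is a unit of the simple artinian ring $Q(B)$ (clear denominators on the $B$ side), so $ZB$ is a prime order in $Q(B)$ with $Q(ZB)=Q(B)$. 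Thus $Z(Q(ZB))=Z(Q(B))=Z$, so the zero ideal of the $Z$-algebra $ZB$ is rational, and the implication $(B)\Rightarrow(C)$ of the equivalence shows that $\{(0)\}$ is locally closed in $\mathrm{Spec}(ZB)$.

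Finally I would unwind this: since $\overline{\{(0)\}}=\mathrm{Spec}(ZB)$, writing $\{(0)\}=C_1\setminus C_2$ with $C_1,C_2$ closed forces $C_1=\mathrm{Spec}(ZB)$, so the set of nonzero primes of $ZB$ equals the closed set $C_2=V(J)$ for some ideal $J$, necessarily nonzero since $(0)\notin V(J)$. Every height one prime of $ZB$ then contains $J$ and, $ZB$ being prime (so $(0)$ is its unique minimal prime), is minimal over $J$; as a noetherian ring has only finitely many primes minimal over a fixed ideal, $ZB$ has finitely many height one primes, proving Corollary~\ref{corollary: main}. The substantive input is entirely Theorem~\ref{theorem: main}; the only steps needing care are the identification $Q(ZB)=Q(B)$ and, more importantly, tracking the base field — Theorem~\ref{theorem: main} must be read as producing the equivalence \emph{relative to $Z$}, which is exactly what makes $(0)$ rational in $ZB$, whereas over $\mathbb C$ it need not be (precisely when $Z\neq\mathbb C$, i.e.\ when $ZB\neq B$). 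In short, the corollary is the observation that base change lets one rationalize the generic point.
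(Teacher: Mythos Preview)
Your proposal is correct and follows essentially the same route as the paper's proof (Theorem~\ref{theorem: centre}): base-change $B$ to $Z$, realize $ZB$ as a prime quotient of $B\otimes_{\mathbb C}Z$, observe that $(0)$ is rational over $Z$, and conclude local closedness. The only cosmetic difference is that you verify rationality by identifying $Q(ZB)=Q(B)$, while the paper checks primeness of $ZB$ by a direct zero-divisor argument and then asserts that its (extended) centre is $Z$; your formulation is arguably the cleaner of the two.
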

 The second focus of this paper is the study of the Dixmier-Moeglin equivalence under Ore extensions.  Here we use Theorem \ref{theorem: main} and Corollary \ref{corollary: main} to obtain our results.  
As mentioned earlier, there are examples of algebras that satisfy the Dixmier-Moeglin equivalence and 
that have Ore extensions that do not. In practice, however, the Dixmier-Moeglin equivalence is generally preserved under the process of taking Ore 
extensions that arise naturally. For example, the work of Goodearl and Letzter \cite{GL} shows that the class of 
iterated Ore extensions now known as CGL extensions all satisfy the Dixmier-Moeglin equivalence.
  
When one compares the class of CGL extensions with the pathological examples appearing in \cite{Lor1, 
BLLM}, an immediate observation one makes is that in the pathological examples above, the 
automorphism and derivation do not preserve a frame of the algebra.  We recall that in a finitely generated 
$k$-algebra $A$, a \emph{frame} is simply a finite-dimensional subspace $V$ of $A$ that contains $1$ 
and that generates $A$ as a $k$-algebra.  Most Ore extensions that arise in the setting of quantum groups 
and enveloping algebras are \emph{frame-preserving}; that is there is frame of the algebra which is 
mapped into itself by the maps involved in the definition of the Ore extension. We make this precise.
\begin{definition} Let $k$ be a field, let $A$ be a finitely generated $k$-algebra, and let $\sigma:A\to A$ and $\delta: A\to A$ be respectively a $k$-linear automorphism and a $k$-linear $\sigma$-derivation. We say that an Ore extension $A[x;\sigma,\delta]$ is \emph{frame-preserving} if there is a frame $V$ of $A$ such that 
$\sigma(V)\subseteq V$ and $\delta(V)\subseteq V$.
\end{definition}
Frame-preserving Ore extensions are very common; indeed, many of the ``naturally arising'' CGL extensions one encounters in quantum groups are formed via iterated frame-preserving Ore extensions.  Frame-preserving Ore extensions also have the benefit of behaving well with respect to Gelfand-Kirillov dimension---see work of Zhang \cite{Zhang} in particular. 

Our main result on Ore extensions is the following.
\begin{theorem} Let $R$ be a finitely generated noetherian $\mathbb{C}$-algebra of finite Gelfand-Kirillov dimension satisfying the Dixmier-Moeglin equivalence and suppose that all prime ideals of $R$ are completely prime.  Then $R[x;T]$ satisfies the Dixmier-Moeglin equivalence whenever $T$ is either a frame-preserving automorphism or a frame-preserving derivation of $R$.
\label{theorem: main2}
\end{theorem}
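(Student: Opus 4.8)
The plan is to run the standard Nullstellensatz reduction and then analyse how primes of $S:=R[x;T]$ lie over primes of $R$, peeling off easy cases until only a "generic fibre" computation remains. First, since $R$ is finitely generated and noetherian and $T$ is an automorphism or a derivation, $S$ is again finitely generated and noetherian (Hilbert basis theorem for Ore extensions), and because $T$ is frame-preserving, Zhang's results \cite{Zhang} give $\mathrm{GKdim}(S)=\mathrm{GKdim}(R)+1<\infty$. Hence $S$ satisfies the Nullstellensatz, so the implications $(C)\Rightarrow(A)\Rightarrow(B)$ of Definition \ref{definition: DM} hold automatically, and it suffices to show that every rational $P\in\mathrm{Spec}(S)$ is locally closed. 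Here I would use the elementary criterion that $\{P\}$ is locally closed precisely when $P$ is \emph{not} the intersection of the primes strictly containing it.

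Next I would reduce to the case that $R$ is a domain with $P\cap R=0$. The contraction $J:=P\cap R$ is a $T$-prime ideal of $R$; in characteristic zero a $\delta$-prime ideal is prime, while a $\sigma$-prime ideal is a transitive $\sigma$-orbit intersection of primes, and in the latter situation a Montgomery-type Morita equivalence replaces $S$, localised along the closed set $V(JS)$, by the analogous Ore extension of $R/Q$ (with $\sigma$ replaced by a suitable power) for $Q$ one of the primes in the orbit. Because $JS$ is then a completely prime ideal of $S$, with $V(JS)\cong\mathrm{Spec}\big((R/J)[x;\bar T]\big)$ closed in $\mathrm{Spec}(S)$, and because rationality and local closedness are unchanged when one restricts to a closed irreducible subset containing $P$ (locally closed inside a closed subset means locally closed), while DME, finiteness of Gelfand--Kirillov dimension, complete primeness, and frame-preservation of $T$ all pass to $R/J$, we may assume $R$ is a finitely generated noetherian $\mathbb{C}$-domain of finite GK dimension satisfying the Dixmier--Moeglin equivalence, with $P\cap R=0$. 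Complete primeness of $R$ now ensures that localising $R$ at its nonzero elements yields a \emph{division ring} $D=Q(R)$, and localising $S$ at $R\setminus\{0\}$ yields $D[x^{\pm1};\tilde\sigma]$ or $D[x;\tilde\delta]$.

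Now I would run the main dichotomy. In the automorphism case $x$ is normal in $S$. The case $P=xS$ is handled directly: $S/xS=R$, so $P$ is rational iff $(0)$ is rational in $R$, hence locally closed in $\mathrm{Spec}(R)$ by DME for $R$, which forces $\bigcap_{P'\supsetneq xS}P'\supsetneq xS$. Otherwise $P$ survives to a prime of $R[x^{\pm1};\sigma]\cong S[x^{-1}]$ with zero contraction, and since this is the open complement of $V(xS)$ in $\mathrm{Spec}(S)$, local closedness may be checked there, reducing us to primes of the principal ideal domain $D[x^{\pm1};\tilde\sigma]$ over the division ring $D$; the derivation case is parallel with $D[x;\tilde\delta]$. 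Such a localised prime is either $(0)$ or a maximal ideal $M$. If it is $M$, then only $(0)$ lies below the corresponding $P$ in $S$, and $S/P$ has Goldie quotient ring $M_n(E)$ with $Z(E)=\mathbb{C}$ by rationality; using finiteness of GK dimension to control the primes of $S/P$ one shows $P$ is not the intersection of the primes strictly above it.

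The crux, and the step I expect to be the main obstacle, is the generic prime $P=0$: one must show that if $Z(Q(S))=\mathbb{C}$ then $\bigcap_{0\ne P'\in\mathrm{Spec}(S)}P'\ne 0$. This is exactly where frame-preservation is indispensable: stabilising the finite-dimensional subspaces $V_n$ (products of at most $n$ elements of a frame $V$) makes $T$ a locally finite operator on $R$, so $T$ decomposes into commuting frame-preserving semisimple and (locally) nilpotent/unipotent parts; the semisimple part endows $R$ with a grading by a finitely generated abelian (semi)group, i.e.\ an algebraic torus action. The nonzero primes of $S$ that contract to a nonzero $T$-prime of $R$ are controlled, via Corollary \ref{corollary: main} applied to $R$, by the finitely many "bad" height-one primes of $R$; one then argues that either the $T$-periodic prime divisors of $R$ are finite in number, forcing the intersection of the nonzero primes of $S$ to be nonzero, or they are infinite in number, in which case the torus/grading picture produces a $T$-invariant element of $Q(R)$ outside $\mathbb{C}$, contradicting rationality of $(0)$. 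When this analysis forces one to pass from $R$ to the non-finitely-generated division ring $D=Q(R)$ or to residue fields of the extended centre, Theorem \ref{theorem: main} is invoked to guarantee that the relevant scalar extensions $R\otimes_{\mathbb{C}}F$ still satisfy the Dixmier--Moeglin equivalence, so the rationality and local-closedness bookkeeping can be carried out over those larger fields.
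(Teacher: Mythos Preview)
Your overall architecture matches the paper's: reduce via the Nullstellensatz to $(B)\Rightarrow(C)$, contract $P$ to $R$, pass to $R/P_0$ (derivation case) or handle a $\sigma$-orbit of primes (automorphism case; the paper does this via a maximality argument together with Letzter's theorem rather than Morita equivalence, but your route is plausible), and finally localise at $R\setminus\{0\}$ to land in $D[x;\tilde\delta]$ or $D[x^{\pm1};\tilde\sigma]$.

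The genuine gap is the step you flag as ``the crux,'' and it is larger than you indicate.  First, the case where the localised prime $\tilde P$ is a maximal ideal $M$ is \emph{not} easy: every $Q\supsetneq P$ with $Q\cap R\ne 0$ meets $R$ in a nonzero $T$-invariant prime, and there is no a priori reason these should be finite in number or share a common element---``finiteness of GK dimension'' does not control this.  Second, for both this case and the generic case $\tilde P=0$, what one actually needs is a single nonzero $T$-eigenvector $c\in R$ lying in every nonzero $T$-invariant prime of $R$ (Lemma~\ref{lem: eigenvector}).  Your proposed mechanism---Jordan-decompose $T$, read off a torus grading, and argue a dichotomy ``finitely many $T$-periodic primes or a $T$-invariant element of $Q(R)\setminus\mathbb{C}$''---does not obviously produce this; there is no general reason an infinite family of graded primes forces a nontrivial invariant.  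You also misapply Corollary~\ref{corollary: main}: it says $Z(Q(R))\cdot R$, not $R$, has finitely many height-one primes, so at best you get an eigenvector in $Z(Q(R))\cdot R$, and the whole difficulty is descending it to $R$.

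The paper's substitute for your torus picture is an analysis of the \emph{extended centre}.  One isolates the subfield $Z_{\mathrm{fd}}\subseteq Z(Q(R))$ generated by finite-dimensional $T$-invariant subspaces and proves it admits a filtration $k=C_0\subseteq\cdots\subseteq C_m$ with $C_{i+1}=C_i[y_{i+1}]$ and $T(y_{i+1})-\lambda_{i+1}y_{i+1}\in\mathrm{Frac}(C_i)$ (Lemma~\ref{lem: key}).  Corollary~\ref{corollary: main} gives an eigenvector in $Z(Q(R))\cdot R$, hence (after an argument) in $Z_{\mathrm{fd}}\cdot R$; one then descends it stepwise through the $C_i$ to $R$.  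The descent is exactly where the rationality of $P$ in $S$ enters: via Lemma~\ref{lem: finite}, rationality forces the monic $T$-eigenvectors in each $\mathrm{Frac}(C_{j-1})[y_j]$ to lie in a finitely generated semigroup, which lets one replace an eigenvector depending on $Q$ by a fixed product independent of $Q$.  Your sketch never uses rationality at this level of precision, and without it the argument does not close.
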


The proof of Theorem \ref{theorem: main} uses a curious fact, of which we were previously unaware.  This is that if $k$ is an uncountable algebraically closed field and $A$ is a $k$-algebra whose dimension is strictly less than the cardinality of $k$ then $A\otimes_k F$ is isomorphic to $A$ as a ring with an isomorphism that maps $k$ isomorphically to $F$ whenever $F$ is an algebraically closed extension of $k$ of the same cardinality as $k$.  Using this fact and descent techniques from Irving and Small \cite{IS} (see also Rowen \cite[Theorem 8.4.27]{Rowen}) we can prove that if $A$ is a countable-dimensional complex noetherian algebra and it satisfies the Dixmier-Moeglin equivalence then extending the base field to any countably generated field extension of the complex numbers results in an algebra that still satisfies the Dixmier-Moeglin equivalence.  This is then enough to deduce the more general result about arbitrary extensions.  Theorem \ref{theorem: main} (and the related Corollary \ref{corollary: main}) is then applied in the proof of Theorem \ref{theorem: main2}, which is obtained via a careful study of the behaviour of the extended centre of algebras under frame-preserving maps.

The outline of this paper is as follows. In \S 2, we prove results on base change, including Theorem \ref{theorem: main} and Corollary \ref{corollary: main}. In \S 3, we prove general decomposition results concerning the extended centres of rings having a frame-preserving automorphism or derivation.  Then if \S 4, we prove Theorem \ref{theorem: main2}.

\section{The Dixmier-Moeglin equivalence under base change}
A natural question to ask is whether a $k$-algebra $A$ satisfying the Dixmier-Moeglin equivalence has the property that $A\otimes_k F$ also satisfies the Dixmier-Moeglin equivalence for an extension $F$ of $k$.  

The following lemma is presumably already known by someone, but we are unaware of a published proof.
\begin{lemma} Let $k$ be an uncountable algebraically closed field, let $A$ be a $k$-algebra and let $F$ be an algebraically closed extension of $k$.  If ${\rm dim}_k(A)<|k|$ and $|k|=|F|$ then
$A\otimes_k F\cong A$ as rings and the isomorphism maps $k\subseteq A$ bijectively onto $F=k\otimes_k F\subseteq A\otimes_k F$.
\label{lem: fields}
\end{lemma}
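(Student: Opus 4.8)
The plan is to realize the abstract field $F$ as the algebraic closure of a purely transcendental extension of $k$ and to build the ring isomorphism $A \otimes_k F \cong A$ by choosing, inside $A$ itself, enough elements that are ``transcendental'' over the $k$-span of any countable (more precisely, of any cardinality $<|k|$) set of elements. First I would fix a transcendence basis $\{t_\alpha\}_{\alpha \in I}$ of $F$ over $k$; since $|F| = |k|$ and $k$ is uncountable, $|I| \le |k|$, and since $F$ is algebraically closed we have $F = \overline{k(\{t_\alpha\})}$. So it suffices to produce, in some extension-free way, a copy of $F$ \emph{sitting inside the center of a ring containing $A$} whose elements are algebraically independent from $A$ over $k$ and then invoke the universal property of the tensor product. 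The key structural observation is that $A \otimes_k F$ contains $A$ and contains $F$ (as $1 \otimes F$), these two subrings commute, and $F$ is central; so the whole ring is generated by $A$ and a central copy of $F$, with $F$ ``free'' over $A$ in the appropriate sense because $F$ is a free $k$-module with a basis one may take to include $1$.

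The main step is a cardinality/genericity argument showing that $A$ already contains a central-enough family of algebraically independent elements. Concretely, let $\kappa = |k|$ and $\lambda = \dim_k A < \kappa$. I want to find elements $s_\alpha \in A$, $\alpha \in I$, such that the subfield $k(\{s_\alpha\})$ of the total ring of fractions makes sense and is purely transcendental of the right transcendence degree, and such that $A$ remains the analogue of ``$A \otimes_k k(\{s_\alpha\})$-free''. The cleanest route avoids fractions: observe that an element $c \in k$ is a scalar, and the point is that in an uncountable algebraically closed field, for \emph{any} subset $S \subseteq k$ with $|S| < |k|$, there are $|k|$-many elements of $k$ transcendental — i.e.\ not algebraic — over the prime field adjoined $S$; but that is about $k$, not $A$. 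The actual mechanism must be: use that $A$ has dimension $<|k|$ to show the polynomial ring $A[y]$ still has dimension $<|k|$, hence $A \otimes_k k[y] \cong A[y]$ trivially, and then — this is the crux — that one can specialize $y$ to a single element $c \in k$ that is transcendental over the (dimension $<|k|$, hence ``small'') subfield of $k$ generated by the structure constants of $A$ relative to a chosen basis, obtaining $A \otimes_k k(y) \cong A$ via $y \mapsto c$. Iterating this transfinitely (taking care at limit stages that the union of $<\kappa$-many subalgebras of dimension $<\kappa$ still has dimension $<\kappa$, which holds since $\kappa$ is uncountable, hence of cofinality $>\omega$... more care is needed if $\mathrm{cf}(\kappa)=\omega$, so I would instead do all $|I|$ transcendentals simultaneously by choosing an algebraically independent subset of $k$ of size $|I|$ over the small field generated by the structure constants) gives $A \otimes_k k(\{t_\alpha\}) \cong A$, and finally taking algebraic closure on the coefficient side — using again that $A$ is already algebraically closed ``enough'', or rather that $A \otimes_k \overline{k(\{t_\alpha\})}$ only adds algebraic relations which can be absorbed — upgrades this to $A \otimes_k F \cong A$.

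The hard part, and the step I expect to be the main obstacle, is the transfinite bookkeeping: ensuring at each stage that the subfield of $k$ one needs to avoid (generated by finitely or $<\kappa$-many structure constants, plus the previously chosen transcendentals) still has cardinality $<|k| = \kappa$, so that there remains an element transcendental over it; the potential pitfall is a limit ordinal of cofinality $\omega$ where a nested union of ``small'' subfields could exhaust $k$. I would sidestep this by not building the $t_\alpha$ one at a time at all: fix once and for all a $k$-basis of $A$, let $K_0 \subseteq k$ be the (necessarily size-$<\kappa$, since it is generated by $\le \lambda + \aleph_0 < \kappa$ elements) subfield generated by all structure constants, note $\mathrm{trdeg}(k/K_0) = \kappa$ because $|k| > |K_0|$ forces $k$ to have a transcendence basis over $K_0$ of full size $\kappa \ge |I|$, pick an algebraically independent family $\{c_\alpha\}_{\alpha\in I} \subseteq k$ over $K_0$, and send $t_\alpha \mapsto c_\alpha$; the resulting $k$-algebra map $F \to k$ is injective on $k(\{t_\alpha\})$ because the $c_\alpha$ are algebraically independent over $K_0 \supseteq k$... wait, $K_0 \subseteq k$, so this needs the independence to be over $k$ itself — and that is exactly what $\mathrm{trdeg}(k/\mathbb{Q}) = \kappa$ vs.\ the dimension count does \emph{not} immediately give, since I cannot have $c_\alpha$ independent over $k$ inside $k$. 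The resolution: I do not embed $F$ into $k$; I embed $F$ into (a localization of) $A$ using that $A$, being infinite-dimensional (if it is finite-dimensional the lemma is a short separate check), contains elements whose images in $A/\mathfrak{m}$ for suitable primitive ideals realize transcendentals — and then the universal property of $\otimes$ gives the ring map $A \otimes_k F \to A$, whose bijectivity is checked on the $k$-basis. Thus the genuine obstacle is producing the central transcendental elements \emph{inside $A$}; I expect to handle it via the Nullstellensatz-type dimension bound ($\dim_k A < |k|$ forcing the extended centre, or a suitable commutative subalgebra, to be ``large'' enough to carry a transcendence basis of size $|I|$) together with a standard generic-flatness / specialization argument, and the transfinite induction then only needs monotone stages and a final union, each of size $<\kappa$.
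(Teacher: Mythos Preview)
Your structure-constants idea is exactly the right starting point, and in fact it is the whole proof---but you abandon it one step too early. Having fixed a $k$-basis of $A$ and defined the small subfield $K_0\subseteq k$ generated by all structure constants (so $|K_0|\le\max(\aleph_0,\dim_k A)<|k|$), the key observation is that the $K_0$-span $A_0$ of your basis is already a $K_0$-algebra, with $A\cong A_0\otimes_{K_0}k$ and hence $A\otimes_k F\cong A_0\otimes_{K_0}F$. At this point you do not need to embed $F$ anywhere: you only need a \emph{$K_0$-algebra isomorphism} $\psi:k\to F$, and then $\mathrm{id}_{A_0}\otimes\psi$ gives the desired ring isomorphism $A\cong A\otimes_k F$ sending $k$ onto $F$. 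Such a $\psi$ exists because $k$ and $F$ are algebraically closed fields of the same cardinality $|k|>|K_0|$, hence have transcendence bases over $K_0$ of the same size $|k|$, and algebraic closures of isomorphic purely transcendental extensions are isomorphic.

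The gap in your plan is that you insist on a $k$-linear map---you write ``the resulting $k$-algebra map $F\to k$'' and then correctly note this is impossible, but draw the wrong conclusion. The isomorphism in the lemma is \emph{not} $k$-linear; indeed the statement says it carries $k$ bijectively onto $F$, so it moves $k$. Once you allow the isomorphism to be only $K_0$-linear, there is nothing to find inside $A$: no central transcendentals, no Nullstellensatz, no transfinite induction. Your proposed resolution (producing central transcendental elements inside $A$ via extended centres or primitive quotients) cannot work in general, since $A$ may have centre exactly $k$ and no commutative subalgebra of large transcendence degree---take for instance $A$ a Weyl algebra.
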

\begin{proof}
Let $\mathcal{B} $ be a $k$-basis for $A$.  Then for each $a,b\in \mathcal{B}$ we may write $ab = \sum_{c\in \mathcal{B}} \lambda_{a,b,c} c$ for some constants $\lambda_{a,b,c}\in k$ with $\lambda_{a,b,c}=0$ for almost all $c$.  We let $k_0$ denote the subfield of $k$ generated over the prime field by
$\lambda_{a,b,c}$ with $a,b,c\in \mathcal{B}$.  Then $k_0$ has cardinality at most $\max(\aleph_0,{\rm dim}_k(A))$ and hence has cardinality strictly less than $k$.   We let $A_0$ denote the $k_0$-vector space spanned by $\mathcal{B} $.  Then by construction $A_0$ is a $k_0$-algebra and $A\cong A_0\otimes_{k_0} k$.  
Then $A\otimes_k F\cong (A_0\otimes_{k_0} k)\otimes_k F \cong A_0\otimes_{k_0} F$.  We claim that $k$ and $F$ have have transcendence bases over $k_0$ of the same size.  To see this, let $X$ and $Y$ be respectively transcendence bases for $k/k_0$ and for $F/k_0$.  Then since $k$ is algebraic over $k_0(X)$ and $k_0(X)$ is infinite, we see that
$|k_0(X)|=|k|$.  Similarly, $|k_0(Y)|=|F|$ and since $|k|=|F|$, we see that $k_0(X)$ and $k_0(Y)$ have the same cardinality.  
We now claim that $|X|=|Y|=|k|$.  To see this, we observe that $X\subseteq k$ and $Y\subseteq F$ giving us $|X|,|Y|\le |k|$.  To see the reverse inequality, we note that $k_0(X)$ is the union over all subfields of the form $k_0(X_0)$ where $X_0$ ranges over all finite subsets of $X$.  For a finitely generated extension $k_0(X_0)$ of $k_0$ we have $k_0(X_0)$ has cardinality at most $\max(\aleph_0,{\rm dim}_k(A))$, since $k_0$ has cardinality bounded above by this quantity.  Thus $k_0(X)$ has cardinality at most $\max(\aleph_0,{\rm dim}_k(A))\cdot |X|$, which is equal to $|X|$, since $|k_0(X)|>\max(\aleph_0,{\rm dim}_k(A))$ and so $|X|>\max(\aleph_0,{\rm dim}_k(A))$.  Thus we see that $|k|=|k_0(X)|=|X|$.  Similarly, $|Y|=|F|=|k|$ and so we see that $k$ and $F$ have transcendence bases of the same size.

Thus we get a $k_0$-algebra isomorphism between $k_0(X)$ and $k_0(Y)$ induced by a bijection of sets from $X$ to $Y$. By uniqueness of algebraic closure, this lifts to a $k_0$-algebra isomorphism between $k$ and $F$. 
(If the reader is concerned about details, by Zorn's lemma one can show that there is an isomorphism $\psi$ from $k$ to $F$ that is the identity on $k_0$ \cite[Theorem 2.6.7]{Scho}.)  Since $\psi$ is the identity on $k_0$, the universal property of tensor products gives that we have a ring homomorphism $A\cong A_0\otimes_{k_0} k \to A\otimes_k F= A_0\otimes_{k_0} F$ given by $a\otimes \lambda \mapsto a\otimes \psi(\lambda)$ for $a\in A_0$ and $\lambda\in k$. The universal property of tensor products also gives that this is an isomorphism of rings and the result follows.
\end{proof}  
We note that although this observation is straightforward, it has many non-trivial consequences.  For example, the first-named author proved that if $k$ is an uncountable algebraically closed field and $A$ is a countably generated (and hence of dimension $<|k|$ as a $k$-vector space) left noetherian $k$-algebra then $A\otimes_k F$ is left noetherian \cite[Theorem 1.2]{Bell} for any extension $F$ of $k$.  We can give a short proof of this result using the preceding lemma.
\begin{theorem} \label{theorem: Bell} Let $k$ be an uncountable algebraically closed field and let $A$ be a left noetherian $k$-algebra with ${\rm dim}_k(A)<|k|$.  Then $A\otimes_k F$ is noetherian for every extension $F$ of $k$.  In particular, if $A$ is also prime then any subfield $L$ of $Q(A)$ containing $k$ is finitely generated as an extension of $k$.
\end{theorem}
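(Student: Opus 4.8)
The plan is to obtain the first assertion directly from Lemma~\ref{lem: fields} together with faithfully flat descent, and the second by using the first to produce an infinite ascending chain of ideals in a suitable base change of (a localization of) $A$.

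For the base-change statement, suppose toward a contradiction that $A\otimes_k F$ fails to be left noetherian for some extension $F/k$. Then it has a left ideal that is not finitely generated, so we may choose $y_1,y_2,\dots\in A\otimes_k F$ with $y_{n+1}\notin \sum_{i\le n}(A\otimes_k F)y_i$ for every $n$. Only countably many elements of $F$ appear in the finitely many simple tensors comprising the $y_n$; letting $F_0\subseteq F$ be the subfield they generate over $k$, the $y_n$ already witness that $A\otimes_k F_0$ is not left noetherian, so it suffices to treat the case that $F_0/k$ is countably generated. In that case $|F_0|=|k|$ (as $k$ is infinite), so the algebraic closure $F_1$ of $F_0$ is an algebraically closed extension of $k$ with $|F_1|=|k|$, and Lemma~\ref{lem: fields} furnishes a ring isomorphism $A\otimes_k F_1\cong A$; hence $A\otimes_k F_1$ is left noetherian. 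Since $A\otimes_k F_1=(A\otimes_k F_0)\otimes_{F_0}F_1$ is a nonzero free, hence faithfully flat, module over $A\otimes_k F_0$, and left noetherianity descends along faithfully flat ring extensions (an ascending chain $I_n$ of left ideals has $(A\otimes_k F_1)I_n$ eventually constant, and faithful flatness then forces the $I_n$ eventually constant), $A\otimes_k F_0$ is left noetherian. This proves the first statement.

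For the ``in particular'', assume $A$ is prime and let $L$ be a subfield of $Q(A)$ with $k\subseteq L$ that is not finitely generated over $k$. Choosing $\theta_1,\theta_2,\dots\in L$ with $k(\theta_1,\dots,\theta_n)\subsetneq k(\theta_1,\dots,\theta_{n+1})$ for all $n$, writing $\theta_i=a_is_i^{-1}$ with $s_i$ (and hence $a_i$, since $\theta_i$ is a unit of $Q(A)$) regular in $A$, and localizing $A$ at the Ore set generated by the $a_i$ and $s_i$, I may replace $A$ by a prime noetherian $k$-algebra $B$ with $Q(B)=Q(A)$, containing all the $\theta_i$, and still satisfying $\dim_k B<|k|$ (localizing at countably many elements multiplies $\dim_k$ by at most $\aleph_0$). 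Suppose first that the $\theta_i$ lie in $Z(Q(A))$, i.e.\ they are central in $B$. If $\mathrm{trdeg}_k\, k(\theta_1,\theta_2,\dots)=\infty$ we may take the $\theta_i$ algebraically independent over $k$; then $R:=B\otimes_k k(x_1,x_2,\dots)$ is noetherian by the first part, the central elements $\theta_i\otimes 1-1\otimes x_i$ generate two-sided ideals $I_n=\sum_{i\le n}R(\theta_i\otimes 1-1\otimes x_i)$, and identifying $R/I_n$ with a localization of $B[x_{n+1},x_{n+2},\dots]$ shows (by a leading-coefficient computation) that $I_1\subsetneq I_2\subsetneq\cdots$, contradicting noetherianity. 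If instead $\mathrm{trdeg}_k\, k(\theta_1,\theta_2,\dots)=d<\infty$ then $d\ge 1$ since $k$ is algebraically closed; picking a transcendence basis $t_1,\dots,t_d$ from among the $\theta_i$ and writing $L_1=k(t_1,\dots,t_d)$, the field $M_\infty:=k(\theta_1,\theta_2,\dots)$ is an infinite algebraic extension of $L_1$ with $M_n:=L_1[\theta_1,\dots,\theta_n]$ sitting centrally in the localization $S^{-1}B$, $S=k[t_1,\dots,t_d]\setminus\{0\}$, and $[M_n:L_1]\to\infty$. Applying the first part with $F=\overline{k(x_1,\dots,x_d)}$ (a finite-transcendence-degree extension, so of cardinality $|k|$) and then killing the central elements $t_j\otimes 1-1\otimes x_j$ yields a noetherian ring $D\cong (S^{-1}B)\otimes_{L_1}\overline{L_1}$ which contains $M_\infty\otimes_{L_1}\overline{L_1}$ as a central subring; but an infinite algebraic extension $M_\infty/L_1$ forces $M_\infty\otimes_{L_1}\overline{L_1}$ to contain either infinitely many orthogonal nonzero idempotents or a sequence of central elements of unbounded nilpotency order, and in either case this gives an infinite ascending chain of ideals in $D$, again a contradiction.

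The step I expect to be the main obstacle is the reduction of an arbitrary subfield $L$ to the case $L\subseteq Z(Q(A))$. When the $\theta_i$ are not central, the two-sided ideal generated by $\theta_i\otimes 1-1\otimes x_i$ also contains the ideal of $B$ generated by the commutators $[\theta_i,B]$, which can be all of $B$ (for instance when $B$ is simple), collapsing the chain to the whole ring. Circumventing this requires first replacing $Q(A)$ by a centralizer, or $L$ by a maximal subfield containing it, in such a way that the relevant generators become central — delicate because centralizers inside $Q(A)$ need not be Goldie quotient rings of noetherian $k$-algebras of dimension $<|k|$, so the base-change input is not directly available there. Making this reduction precise is where the real work lies.
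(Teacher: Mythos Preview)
Your argument for the first assertion is essentially the paper's: descend to a countably generated subextension $F_0$, pass to its algebraic closure, apply Lemma~\ref{lem: fields}, and use freeness (equivalently, faithful flatness) to pull left noetherianity back down to $A\otimes_k F_0$.

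For the ``in particular'' clause, however, you have a real gap, and you have correctly located it: your ascending-chain construction requires the $\theta_i$ to be central in $B$, and there is no reduction to that case.  The attempt to replace $Q(A)$ by a centralizer or $L$ by a maximal subfield does not salvage the argument for the reason you indicate, and even in the central case your treatment is more delicate than necessary (splitting on transcendence degree, analyzing $M_\infty\otimes_{L_1}\overline{L_1}$, etc.).

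The paper bypasses centrality entirely by invoking a theorem of Vamos \cite{Vamos}: if $L/k$ is not finitely generated, then the commutative ring $L\otimes_k L$ is not noetherian.  No hypothesis on $L$ beyond ``subfield of $Q(A)$ containing $k$'' is needed.  Since $L\subseteq Q(A)$, the ring $Q(A)$ is a right $L$-vector space, hence free as a right $L$-module; tensoring over $k$ with $L$, one sees that $Q(A)\otimes_k L$ is free as a right $(L\otimes_k L)$-module, so the map $I\mapsto (Q(A)\otimes_k L)I$ embeds the poset of ideals of $L\otimes_k L$ into the poset of left ideals of $Q(A)\otimes_k L$.  Thus $Q(A)\otimes_k L$ is not left noetherian.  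But $Q(A)\otimes_k L$ is a localization of $A\otimes_k L$, so $A\otimes_k L$ is not left noetherian either, contradicting the first part.  The whole second assertion is three lines once you know Vamos's result, and the centrality issue never arises.
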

\begin{proof} Suppose that there is some field extension $F$ of $k$ such that $A\otimes_k F$ is not left noetherian. Then there is some ideal left ideal $I$ of $A\otimes_k F$ that is countably generated but is not finitely generated.  Let 
$r_1,r_2,\ldots $ be generators for this ideal.  Then each $r_i$ has an expression $\sum a_{i,j}\otimes \lambda_{i,j}$.  We let $F_0$ denote the subfield of $F$ generated generated by $k$ and the $\lambda_{i,j}$.  Then by construction $F_0$ is countably generated over $k$ and so $F_0$ and $k$ have the same cardinality since $k$ is uncountable.  Moreover, $A\otimes_k F_0$ is not left noetherian by construction.  Now let $\bar{F}_0$ denote the algebraic closure of $F_0$.  Then $A\otimes_k \bar{F}_0$ is a free right $A\otimes_k F_0$-module and so the map $L\mapsto (A\otimes_k \bar{F}_0)L$ gives an inclusion preserving embedding of the poset of left ideals of $A\otimes_k F_0$ into the poset of left ideals of $A\otimes_k \bar{F}_0$.  But $\bar{F}_0$ and $k$ have the same cardinality and so $A\otimes_k \bar{F}_0\cong A$ as rings by Lemma \ref{lem: fields}, contradicting the fact that $A$ is left noetherian.  

Next suppose that $A$ is also prime. We note that if $L$ is a subfield of $Q(A)$ containing $k$ and $L/k$ is not finitely generated then a result of Vamos \cite{Vamos} gives that 
$L\otimes_k L$ is not noetherian.  Since $Q(A)$ is free over $L$ as a right vector space, we again have that $Q(A)\otimes_k L$ is not left noetherian and since $Q(A)\otimes_k L$ is a localization of $A\otimes_k L$, we see that $A\otimes_k L$ is not left noetherian, contradicting the preservation of the noetherian property under base change that we have just demonstrated.
\end{proof}
As a corollary, we can prove the following result about the Dixmier-Moeglin equivalence and base change.  We note that the properties of being locally closed and primitive are isomorphism invariants and so Lemma \ref{lem: fields} can be applied with no problems. The property of rationality really depends upon a base field and so the one subtlety is that we must use that the isomorphism given in the statement of Lemma \ref{lem: fields} restricts to an isomorphism of the corresponding base fields.
\begin{theorem} Let $k$ be an uncountable algebraically closed field of characteristic zero and let $A$ be a countable-dimensional noetherian $k$-algebra satisfying the Dixmier-Moeglin equivalence and let $F$ be an extension of $k$.  Then $A\otimes_k F$ satisfies the Dixmier-Moeglin equivalence.
\label{theorem: extension}
\end{theorem}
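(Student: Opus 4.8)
The plan is to reduce, in stages, to the situation already handled by Lemma~\ref{lem: fields}, namely that of an algebraically closed extension of the same cardinality as $k$.

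First I would observe that only one of the two nontrivial implications of Definition~\ref{definition: DM} remains to be proved. Since $A$ is countable-dimensional it is countably generated as a $k$-algebra, so $A\otimes_k F$ is countably generated over $F$, and $F\supseteq k$ is uncountable; hence $A\otimes_k F$ satisfies the Nullstellensatz, and the implications $(C)\Rightarrow(A)\Rightarrow(B)$ hold automatically (cf.\ \cite[II.7.16]{BG}, \cite[Prop.~6]{Lor2}). Moreover $A\otimes_k F$ is noetherian by Theorem~\ref{theorem: Bell}, so every prime ideal of $A\otimes_k F$ is finitely generated. It therefore suffices to show that every rational prime of $A\otimes_k F$ is locally closed.

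Next I would fix a rational prime $P$ of $A\otimes_k F$ and write $P=(p_1,\dots,p_r)(A\otimes_k F)$; let $F_0\subseteq F$ be the subfield generated over $k$ by the finitely many elements of $F$ occurring in $p_1,\dots,p_r$, so that $F_0/k$ is finitely generated and $P=P_0\cdot(A\otimes_k F)$ with $P_0:=P\cap(A\otimes_k F_0)$. The extension $A\otimes_k F_0\subseteq A\otimes_k F$ is faithfully flat, and using the descent techniques of Irving--Small (see \cite{IS} and \cite[Theorem~8.4.27]{Rowen}) I would show that $P_0$ is prime and that $P$ is rational (resp.\ locally closed, resp.\ primitive) over $F$ precisely when $P_0$ is rational (resp.\ locally closed, resp.\ primitive) over $F_0$. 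The delicate point here is rationality, which genuinely depends on the base field: one must verify that the extended centre of $(A\otimes_k F_0)/P_0$ is algebraic over $F_0$ exactly when that of $(A\otimes_k F)/P$ is algebraic over $F$, and this is where it is essential that $P$ was already ``defined over $F_0$.'' This step reduces the theorem to the case in which $F$ itself is finitely generated over $k$.

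Finally I would treat the finitely generated case. Let $\overline F$ be an algebraic closure of $F$. Since $k$ is uncountable and $F\supseteq k$, we have $|\overline F|=|F|=|k|$, while $\dim_k A\le\aleph_0<|k|$; so Lemma~\ref{lem: fields} provides a ring isomorphism $A\otimes_k\overline F\cong A$ carrying the copy of $\overline F$ inside $A\otimes_k\overline F$ bijectively onto $k\subseteq A$. Consequently $A\otimes_k\overline F$, regarded as an $\overline F$-algebra, is isomorphic to $A$ regarded as a $k$-algebra by a ring isomorphism that restricts to an isomorphism of the base fields; since local closedness and primitivity are ring-theoretic and rationality is preserved under such compatible isomorphisms, $A\otimes_k\overline F$ satisfies the Dixmier--Moeglin equivalence. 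It then remains to descend along the algebraic extension $\overline F/F$: writing $A\otimes_k\overline F$ as the directed union of the module-finite extensions $A\otimes_k L$ over finite subextensions $F\subseteq L\subseteq\overline F$, and applying the Irving--Small descent for finite extensions together with the good behaviour of the three properties under directed unions, yields that $A\otimes_k F$ satisfies the Dixmier--Moeglin equivalence; combined with the previous paragraph, this completes the proof. The main obstacle is exactly the transfer of all three Dixmier--Moeglin properties -- and especially of rationality -- across the two extensions $A\otimes_k F_0\subseteq A\otimes_k F$ and $A\otimes_k F\subseteq A\otimes_k\overline F$: making these transfers precise, so that ``defined over $F_0$'' forces the extended centres to match and so that passage through the infinite algebraic extension $\overline F/F$ stays controlled, is the technical heart of the argument and is where Lemma~\ref{lem: fields} and the descent machinery of \cite{IS} are brought to bear.
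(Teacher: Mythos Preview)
Your overall architecture matches the paper's exactly: reduce to the implication $(B)\Rightarrow(C)$ via the Nullstellensatz, descend the problem to an extension $F_0$ of $k$ with $|F_0|=|k|$, pass to the algebraic closure $\overline{F_0}$, invoke Lemma~\ref{lem: fields} to identify $A\otimes_k\overline{F_0}$ with $A$, and then use Irving--Small/Rowen to descend from $\overline{F_0}$ to $F_0$. The second descent (from $\overline{F_0}$ to $F_0$) is precisely where Rowen \cite[Theorem~8.4.27]{Rowen} applies, and the paper uses it in the same way you do.

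The gap is in your \emph{first} descent, from an arbitrary $F$ down to a finitely generated $F_0$. You invoke Irving--Small to transfer all three properties---in particular local closedness---across $A\otimes_k F_0\subseteq A\otimes_k F$, but that machinery is designed for passage to the algebraic closure, not for a (possibly enormous) transcendental extension. In fact the implication ``$P_0$ locally closed $\Rightarrow$ $P$ locally closed'' can fail along such extensions: take $A=k(x)$, $F_0=k$, $F=k(t)$; then $(0)$ is locally closed in $A$ but not in $A\otimes_k F\cong k(x)\otimes_k k(t)$, which has infinitely many height-one primes such as $(x-t^n)$. Your rationality hypothesis on $P$ rules out this particular example, but nothing in \cite{IS} or \cite{Rowen} supplies the needed implication even under that hypothesis, so the step is unjustified as written.

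The paper repairs this by arguing the contrapositive and descending not just $P$ but the \emph{witnesses} to its failure to be locally closed. If $P$ is rational yet not locally closed, one chooses countably many primes $Q_1,Q_2,\ldots$ strictly above $P$ with $\bigcap_i Q_i=P$ (possible because $A\otimes_k F$ is countable-dimensional over $F$), and then takes $F_0$ large enough that $P$ and every $Q_i$ are generated over $A\otimes_k F_0$. This forces $F_0$ to be countably generated rather than finitely generated, but that is harmless since one still has $|\overline{F_0}|=|k|$. Now the contractions $Q_i\cap(A\otimes_k F_0)$ are distinct primes properly containing $P_0$ with intersection $P_0$, so $P_0$ is directly seen to be rational and not locally closed, and the contradiction with Lemma~\ref{lem: fields} plus Irving--Small follows. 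Replacing your first descent with this witness-descent argument makes the proof go through.
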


\begin{proof} Suppose that there is some field extension $F$ of $k$ for which $A\otimes_k F$ does not satisfy the Dixmier-Moeglin equivalence (where we regard $A\otimes_k F$ as an $F$-algebra when considering rationality of prime ideals).  We shall first show that if there is a field extension $F$ of $k$ such that $A\otimes_k F$ does not satisfy the Dixmier-Moeglin equivalence then there must exist such an $F$ with $|F|=|k|$. We have that $A\otimes_k F$ satisfies the Nullstellensatz (cf. \cite[II.7.16]{BG} and \cite[Prop. 6]{Lor2}) and so we then have that the implication $(B)\implies (C)$ in Definition \ref{definition: DM} does not hold. Then there is a rational prime ideal $P$ of $A\otimes_k F$ (i.e., the extended centre of $(A\otimes_k F)/P$ is an algebraic extension of $F$) that is not locally closed in ${\rm Spec}(A\otimes_k F)$.  In particular, there is an infinite set of prime ideals $Q_1,Q_2,\ldots $ that are height one over $P$.  Since ${\rm dim}_k(A)={\rm dim}_F(A\otimes_k F)$ and $A$ is at most countably infinite-dimensional over $k$, we have
that each of $P, Q_1,Q_2,\ldots $ can be generated as an ideal by a countable subset of $A\otimes_k F$ (for example, we can choose an $F$-basis for each ideal as our set of generators).  In particular, there is some countable-dimensional $k$-vector subspace $W$ of $F$ such that our countable generating sets for $P, Q_1,\ldots $ are all contained in $A\otimes_k W$.  Letting $F_0$ denote the extension of $k$ generated by $W$, we then see that $F_0$ is an extension of $k$ that is generated by a set of cardinality at most $\aleph_0$ and has the property that $P,Q_1,\ldots $ all contract to prime ideals of $A\otimes_k F_0$.  

In particular, if $P_0$ is the contraction of $P$ (that is, $P_0=P\cap (A\otimes_k F_0)$) then $P_0$ is not locally closed in $A\otimes_k F_0$.  But it is straightforward to see that $P_0$ is necessarily rational in ${\rm Spec}(A\otimes_k F_0)$.  Thus if the conclusion to the theorem does not hold, then there exists an extension $F$ of $k$ of the same cardinality of $k$ such that $A\otimes_k F$ does not satisfy the Dixmier-Moeglin equivalence. By Lemma \ref{lem: fields} we have that $A\cong A\otimes_k \bar{F}$ under an isomorphism that sends $k$ to $\bar{F}$ and hence $A\otimes_k \bar{F}$ satisfies the Dixmier-Moeglin equivalence, where $\bar{F}$ is the algebraic closure of $F$.  Finally, we note that $A\otimes_k F$ remains noetherian under base change by Theorem \ref{theorem: Bell}, and $(A\otimes_k F)\otimes_F L$ is Jacobson, and satisfies the Nullstellensatz for every extension $L$ of $F$ \cite[II.7.12 and II.7.16]{BG}.  Since $A\otimes_k \bar{F}$ satisfies the Dixmier-Moeglin equivalence, Irving-Small reduction techniques (see Rowen \cite[Theorem 8.4.27]{Rowen} in regards to where the characteristic zero hypothesis is needed) give that $A\otimes_k F$ does too.  In particular, we have shown that $A\otimes_k F$ satisfies the Dixmier-Moeglin equivalence for every countably generated extension $F$ of $k$.  But the above remarks then show that the Dixmier-Moeglin equivalence holds for $A\otimes_k F$ for every extension $F$ of $k$. The result follows.  
\end{proof}
The following result can be thought of as a strengthening of the locally closed condition.  In particular, it shows that after adjoining the extended centre to a prime algebra satisfying the Dixmier-Moeglin equivalence the resulting algebra has only finitely many height one primes.  This agrees with what has been found in some other settings where the Dixmier-Moeglin equivalence has been proved, in which it is shown that after inverting a single normal element, every height one prime intersects the centre non-trivially.  The following theorem can be seen as a result in a similar vein.  
\begin{theorem} Let $k$ be an uncountable algebraically closed field of characteristic $0$ and let $A$ be a noetherian countable-dimensional $k$-algebra satisfying the Dixmier-Moeglin equivalence.  Then if $P$ is a prime ideal of $A$ and $B=A/P$ then $Z(Q(B))\cdot B\subseteq Q(B)$ has only finitely many height one prime ideals.  
\label{theorem: centre}
\end{theorem}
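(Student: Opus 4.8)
The plan is to reduce at once to the case $P=(0)$: replacing $A$ by $A/P$, I may assume that $A$ is a prime noetherian countable-dimensional $k$-algebra satisfying the Dixmier-Moeglin equivalence, since $A/P$ inherits all of these properties. In particular $A/P$ still satisfies the equivalence because the Dixmier-Moeglin equivalence passes to quotients: for $Q\supseteq P$ the prime $Q$ is primitive in $A$ iff $Q/P$ is primitive in $A/P$, $Q$ is rational iff $Q/P$ is (the relevant quotient ring is unchanged), local closedness of $\{Q\}$ in ${\rm Spec}(A)$ passes down to the closed subspace $V(P)={\rm Spec}(A/P)$, and $A/P$ again satisfies the Nullstellensatz, which supplies the remaining implication $(C)\Rightarrow(A)$. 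Write $Q=Q(A)$ for the simple Artinian Goldie quotient ring of $A$ and $Z=Z(Q)$ for its centre, a field containing $k$; the goal becomes to show that $ZA\subseteq Q$ has only finitely many height one primes.

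The structural heart of the argument is that $ZA$ is a homomorphic image of a base change of $A$. Since $Z$ is central in $Q$, the multiplication map $\mu\colon A\otimes_k Z\to Q$, $a\otimes z\mapsto az$, is a ring homomorphism with image $ZA$, so $ZA\cong (A\otimes_k Z)/\ker\mu$. Now $A\otimes_k Z$ is noetherian by Theorem \ref{theorem: Bell} (as $\dim_k A\le\aleph_0<|k|$) and satisfies the Dixmier-Moeglin equivalence by Theorem \ref{theorem: extension}; hence so does its quotient $ZA$, again using that the equivalence passes to quotients, together with the fact that $ZA$ — being a countably generated noetherian algebra over the uncountable field $Z$ — satisfies the Nullstellensatz. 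I also need that $ZA$ is prime with $Q(ZA)=Q$: it lies between $A$ and $Q$, every regular element of $A$ is invertible in $Q$ and therefore regular in $ZA$, so $Q$ is a localization of $ZA$, and it is standard (this is the central closure of the prime ring $A$) that $ZA$ is then prime and an order in $Q$. In particular the extended centre of $ZA$ is $Z(Q(ZA))=Z$, which is exactly the base field of $ZA$.

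Granting this, the finiteness is formal. The zero ideal of $ZA$ is a prime whose extended centre $Z$ is trivially algebraic over the base field $Z$, so $(0)$ is rational in ${\rm Spec}(ZA)$; by the Dixmier-Moeglin equivalence for $ZA$ it is therefore locally closed. But $\{(0)\}$ is the generic point of the irreducible space ${\rm Spec}(ZA)$, and a locally closed generic point is necessarily open, so ${\rm Spec}(ZA)\setminus\{(0)\}=V(J)$ for some nonzero ideal $J$ of $ZA$. Since $ZA$ is noetherian, $J$ has finitely many minimal primes $Q_1,\dots,Q_s$, each containing $J\ne(0)$ and hence of height $\ge 1$. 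Any height one prime $Q$ of $ZA$ is nonzero, so it lies in $V(J)$ and contains some $Q_i$; as ${\rm ht}(Q_i)\ge 1={\rm ht}(Q)$, this forces $Q=Q_i$. Hence $ZA$ has at most $s$ height one primes, which is the assertion.

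The main obstacle is really just the bookkeeping needed to see that $ZA$ is simultaneously (i) a quotient of $A\otimes_k Z$, so that Theorems \ref{theorem: Bell} and \ref{theorem: extension} apply, and (ii) still a prime noetherian order in $Q$ whose extended centre equals its base field $Z$. Once both facts are in place, the Dixmier-Moeglin equivalence applied only to the single prime $(0)$ of $ZA$ does all the work, and the spectral argument deducing finiteness of the height one primes is routine.
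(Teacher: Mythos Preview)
Your proof is correct and follows essentially the same route as the paper: realize $ZA$ as the quotient of $A\otimes_k Z$ by the kernel of the multiplication map, invoke Theorems \ref{theorem: Bell} and \ref{theorem: extension} to get that $ZA$ is noetherian and satisfies the Dixmier--Moeglin equivalence as a $Z$-algebra, then observe that $(0)$ is rational (since the extended centre is $Z$ itself) and hence locally closed. The only cosmetic differences are that you cite primality of $ZA$ as a standard fact about central closures where the paper verifies it directly by clearing denominators, and you spell out the ``locally closed $\Rightarrow$ finitely many height one primes'' step in more detail.
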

\begin{proof}
Let $Z=Z(Q(B))$.  By Theorem \ref{theorem: extension} the ring $B\otimes_k Z$ is a $Z$-algebra satisfying the Dixmier-Moeglin equivalence.  Then we have a map $B\otimes_k Z\to BZ$ given by $b\otimes z\mapsto bz$.  This map is onto and we let $Q$ denote the kernel of this map.  We claim that $Q$ is a prime ideal of $B\otimes_k Z$; equivalently, we claim that $BZ$ is a prime ring.  To see this, suppose that $BZ$ is not prime.  Then there exist $x,y\in BZ$ such that $xBZy=(0)$.  We write
$x=\sum_{i=1}^d b_i z_i$ with $b_i\in B$ and $z_i\in Z$ and we write $y=\sum_{j=1}^e b_j' z_j'$ with $b_j'\in B$ and $z_j'\in Z$.  We pick a nonzero regular element $c\in B$ such that $z_i c$ and $z_j'c$ are in $B$ for all applicable $i$ and $j$.  By construction $(xc)B(yc)\subseteq xBZyc = (0)$ and since $xc\in B$ and $yc\in B$ and since $B$ is prime, we then see that either $xc=0$ or $yc=0$.  Since $c$ is regular in $B$, it is regular in $Q(B)$ and so we see that either $x=0$ or $y=0$.  Thus $BZ$ is prime and so $Q$ is a prime ideal of $B\otimes_k Z$.  

Since $B\otimes_k Z$ satisfies the Dixmier-Moeglin equivalence (as a $Z$-algebra) and since $Q$ is prime, we see that
$BZ\cong (B\otimes_k Z)/Q$ is a $Z$-algebra that satisfies the Dixmier-Moeglin equivalence.  Since the centre of $ZB$ is equal to $Z$, we see that $Q$ is a rational prime ideal of $B\otimes_k Z$. Thus $Q$ is locally closed and so $BZ$ has only finitely many height one primes.  \end{proof}
\section{Linear operators on rings}
In this section, we study linear operators on rings with a view towards proving our results about Ore extensions. 

  Throughout this section, we make the following assumptions.
\begin{assume*} We assume that $k$ is an uncountable, algebraically closed field, that $R$ is a finitely generated prime noetherian $k$-algebra, and that $T:R\to R$ is a $k$-linear map that is either an automorphism of $R$ or a derivation of $R$ and that there is a finite-dimensional generating subspace $V$ of $R$ that contains $1$ and is $T$-invariant.  
\end{assume*}
We note that in this case the subspaces $V^n$ are $T$-invariant for all $n\ge 1$.  In particular, for each $n$, we may find a basis $\mathcal{B}_n$ for $V^n/V^{n-1}$ such that $T$ is triangular with respect to this basis.  Then by choosing a subset $\mathcal{C}_n\subseteq V^n$ whose image in $V^n/V^{n-1}$ is $\mathcal{B}_n$ and taking the union of these $\mathcal{C}_n$'s in increasing order with $n$, we see that $T$ is \emph{triangularizable} on $R$; that is, there is a basis, $1=v_0,v_1,\ldots $ for $R$ such that $T(v_i)\in kv_0+\cdots +kv_i$ for all $i\ge 0$. Given $c=\sum_{i=0}^m c_i v_i\in R$ with $c_m\neq 0$.  We call $m$ the \emph{height} of $c$.
\begin{rem}
A nonzero $T$-invariant ideal $I$ of $R$ contains a $T$-eigenvector.
\label{rem: rem}
\end{rem}
To see this, we just take a nonzero element $x=\sum_{i=1}^m c_i v_i\in I$ with $c_m\neq 0$ and with $m$ minimal.  Then $T(v_m)-\lambda v_m\in \sum_{i<m} kv_i$ for some $\lambda\in k$ and so 
$T(x)-\lambda x\in I$ is in $\sum_{i<m} kv_i$.  By minimality of $m$ we see that $T(x)=\lambda x$.

We note that the operator $T$ extends to $Q(R)$.  We then let $\mathcal{X}(R)$ to be the collection of subfields $F$ of $Z(Q(R))$ with the property that $F$ is generated as a field extension of $k$ by a finite-dimensional $T$-invariant $k$-vector space.
\begin{equation}
Z_{{\rm fd}}:= \bigcup_{F\in \mathcal{X}(R)} F.
\end{equation}

We remark that $Z_{{\rm fd}}$ is in fact a field, since if $F$ and $F'$ are fields generated by finite-dimensional $T$-invariant subspaces $V$ and $V'$ of $Z(Q(R))$ then $V+V'$ is $T$-invariant and generates a field containing both $V$ and $V'$. Since all subfields of $Z(Q(R))$ are finitely generated (see Theorem \ref{theorem: Bell}), we see that $Z_{{\rm fd}}$ is in fact the field of fractions of an algebra generated by a single finite-dimensional $T$-invariant subspace of $Z(Q(R))$.  By construction, $Z_{{\rm fd}}$ is $T$-invariant.  We use the notation $Z_{{\rm fd}}$ to capture the fact that it is generated by central finite-dimensional $T$-invariant subspaces.
  
Our key lemma is the following result that provides a decomposition of $Z_{{\rm fd}}$.
\begin{lemma}
There exists a family of central subalgebras
$k=C_0\subseteq C_1\subseteq \cdots \subseteq C_m$ of $Z(Q(R))$ such that:
\begin{enumerate}
\item for each $i<m$, there exists $y_{i+1}\in Z(Q(R))$ such that $C_{i+1}=C_i[y_{i+1}]$;
\item for each $i<m$ there is some $\lambda_i\in k$ such that $T(y_i)-\lambda_i y_i \in {\rm Frac}(C_i)$;
\item for each $i\le m$, ${\rm Frac}(C_i)$ is a $T$-invariant subfield of $Z(Q(R))$;  
\item for each $i\le m$ there is a finite-dimensional $T$-invariant subspace of $C_i$ that generates $C_i$ as a $k$-algebra;
\item ${\rm Frac}(C_m)=Z_{{\rm fd}}$.
\end{enumerate}
\label{lem: key}\end{lemma}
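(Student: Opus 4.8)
The plan is to build the chain $C_0 \subseteq C_1 \subseteq \cdots \subseteq C_m$ by an inductive ``peeling off one eigenvector at a time'' procedure, using the triangularizability of $T$ on $R$ (hence on $Q(R)$ and its central subfields) established just before the statement. By the remarks preceding the lemma, $Z_{\mathrm{fd}}$ is the field of fractions of a $k$-algebra $C$ generated by a single finite-dimensional $T$-invariant subspace $W \subseteq Z(Q(R))$; without loss of generality $1 \in W$ and $W$ is closed under the triangular action, so $W$ has a basis $1 = w_0, w_1, \ldots, w_N$ with $T(w_j) \in kw_0 + \cdots + kw_j$. Because $T$ is a derivation or automorphism, one checks that the $k$-subalgebra generated by $w_0, \ldots, w_j$ is again $T$-invariant for each $j$, and is finitely generated by a $T$-invariant finite-dimensional subspace (a suitable power of $W \cap (kw_0 + \cdots + kw_j)$, or more simply the image of $(W)^{\le \deg}$ truncated appropriately). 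This already suggests the skeleton of the chain; the work is in arranging conditions (1) and (2) simultaneously.

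First I would set up the induction on $N$ (the number of ``new'' generators needed). Suppose we have constructed $k = C_0 \subseteq \cdots \subseteq C_i$ satisfying (1)--(4) with $\mathrm{Frac}(C_i) = k(w_0, \ldots, w_j)$ for some $j < N$. Consider $w_{j+1}$. Its image modulo $kw_0 + \cdots + kw_j$ is a $T$-eigenvector with some eigenvalue $\lambda \in k$ (here is where algebraic closedness of $k$ is used), so $T(w_{j+1}) - \lambda w_{j+1} \in kw_0 + \cdots + kw_j \subseteq \mathrm{Frac}(C_i)$. Set $y_{i+1} = w_{j+1}$, $\lambda_i = \lambda$, and $C_{i+1} = C_i[y_{i+1}]$. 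Conditions (1) and (2) hold by construction. For (3): $\mathrm{Frac}(C_{i+1}) = \mathrm{Frac}(C_i)(y_{i+1})$, and since $T(y_{i+1}) \in \mathrm{Frac}(C_i) + k y_{i+1} \subseteq \mathrm{Frac}(C_{i+1})$ and $T$ preserves $\mathrm{Frac}(C_i)$ by the inductive hypothesis, $T$ maps the generators of $\mathrm{Frac}(C_{i+1})$ into $\mathrm{Frac}(C_{i+1})$; one then invokes the quotient/product rules (for a derivation) or multiplicativity (for an automorphism) to conclude $T(\mathrm{Frac}(C_{i+1})) \subseteq \mathrm{Frac}(C_{i+1})$. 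For (4): take the finite-dimensional $T$-invariant space $U_i$ generating $C_i$ (inductive hypothesis) and enlarge to $U_i + k y_{i+1} + (\text{finitely many products needed to express } T(y_{i+1}) \text{'s denominators})$ — but here one must be careful, since $T(y_{i+1}) - \lambda y_{i+1}$ lies in $\mathrm{Frac}(C_i)$, not necessarily in $C_i$. This is the one genuine subtlety.

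The main obstacle, as just flagged, is condition (4): keeping each $C_i$ \emph{finitely generated by a finite-dimensional $T$-invariant subspace} while only controlling $T(y_{i+1})$ up to an element of $\mathrm{Frac}(C_i)$ rather than $C_i$ itself. I would handle this by being more flexible about the choice of the $C_i$: instead of insisting $C_{i+1} = C_i[y_{i+1}]$ literally, allow $y_{i+1}$ to be replaced by $c \cdot w_{j+1}$ for a suitable nonzero $c \in C_i$ clearing the relevant denominators, noting that $\mathrm{Frac}(C_i[c w_{j+1}]) = \mathrm{Frac}(C_i)(w_{j+1})$ is unchanged and that $T(c w_{j+1}) = T(c) w_{j+1} + c\, T(w_{j+1})$ (derivation case) stays inside $C_i[c w_{j+1}]$ once $c$ is chosen to absorb the denominators of $T(w_{j+1}) - \lambda w_{j+1} \in \mathrm{Frac}(C_i)$; a parallel computation works in the automorphism case. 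The finite-dimensional $T$-invariant generating subspace of $C_{i+1}$ is then spanned by a basis of the $T$-invariant $U_i$ generating $C_i$ together with $c w_{j+1}$ and the finitely many additional products forced by the relation — and $T$-invariance of this span follows because $T$ of each listed generator was arranged to lie in the $C_i$-span of the listed generators. Iterating until all of $w_0, \ldots, w_N$ are absorbed yields $C_m$ with $\mathrm{Frac}(C_m) = k(w_0, \ldots, w_N) = \mathrm{Frac}(C) = Z_{\mathrm{fd}}$, giving (5) and completing the proof. Throughout, the facts that all subfields of $Z(Q(R))$ are finitely generated over $k$ (Theorem \ref{theorem: Bell}) and that $Z_{\mathrm{fd}}$ is singly generated as a field are what guarantee the process terminates after finitely many steps.
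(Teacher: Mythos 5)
Your core route is sound and genuinely different from the paper's, and in fact more direct. The paper does not fix a triangular basis of a generating subspace at the outset: it considers chains satisfying (1)--(4), picks one with ${\rm Frac}(C_m)$ maximal (using the ACC on intermediate fields supplied by Theorem \ref{theorem: Bell}), and, assuming ${\rm Frac}(C_m)\neq Z_{\rm fd}$, manufactures one further generator of the form $y=bD^{-1}$, where $D$ is a $T$-eigenvector in the conductor ideal $\{x\in R: xV\subseteq R\}$ (Remark \ref{rem: rem}) and $b\in R$ has minimal height with respect to a $T$-triangular basis of $R$; minimality forces $T(y)-\lambda y\in {\rm Frac}(C_m)$ and contradicts maximality. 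You instead triangularize $T$ once and for all on a single finite-dimensional $T$-invariant subspace $W={\rm Span}(1=w_0,w_1,\ldots,w_N)$ of $Z(Q(R))$ generating $Z_{\rm fd}$ as a field (available by the remark before the lemma, again via Theorem \ref{theorem: Bell}), and take the chain attached to the flag. Executed cleanly this works and is shorter: with $C_i:=k[w_0,\ldots,w_i]$ and $y_{i+1}:=w_{i+1}$ one gets (1) by definition, (2) because $T(w_{i+1})-\lambda w_{i+1}\in kw_0+\cdots+kw_i\subseteq C_i$, (3) because a subfield generated by elements whose $T$-images lie in it is $T$-invariant (quotient rule for a derivation, multiplicativity for an automorphism), (4) with the $T$-invariant generating space $kw_0+\cdots+kw_i$, and (5) since $k(w_0,\ldots,w_N)=Z_{\rm fd}$. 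What the paper's heavier argument buys is that it never needs the new generator to lie in a prechosen $W$; what yours buys is the elimination of the eigenvector-denominator bookkeeping.

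The one place you go astray is precisely your announced fix for ``the one genuine subtlety,'' which is both unnecessary and incorrect as written. The subtlety is self-inflicted: it arises only because your induction carries merely ${\rm Frac}(C_i)=k(w_0,\ldots,w_j)$ rather than $C_i=k[w_0,\ldots,w_j]$; with the stronger hypothesis the defect $T(w_{j+1})-\lambda w_{j+1}$ lies in $C_i$ itself and no denominators ever occur. If instead you replace $w_{j+1}$ by $cw_{j+1}$ for an arbitrary denominator-clearing $c\in C_i$, the construction breaks: in the derivation case $T(cw_{j+1})=T(c)w_{j+1}+cT(w_{j+1})$, and the term $T(c)w_{j+1}=\bigl(T(c)/c\bigr)\,(cw_{j+1})$ need not lie in $C_i[cw_{j+1}]$, so the claimed $T$-stability (needed for (4)) fails; moreover condition (2) for $y_{i+1}=cw_{j+1}$ forces the coefficient of $w_{j+1}$ in $T(y_{i+1})-\mu y_{i+1}$ to vanish (since $w_{j+1}\notin{\rm Frac}(C_i)$ and ${\rm Frac}(C_i)$ is a field), i.e.\ $T(c)\in kc$, so $c$ must be a $T$-eigenvector --- a requirement you neither impose nor verify can be met; a parallel computation gives the same constraint in the automorphism case. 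This can be repaired (the relevant conductor ideal is $T$-invariant, so it contains a $T$-eigenvector by the argument of Remark \ref{rem: rem} --- exactly the role the eigenvector $D$ plays in the paper's proof), but the cleaner course is simply to strengthen the induction so that $C_i=k[w_0,\ldots,w_j]$, after which no multiplier $c$ is needed at all.
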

\begin{proof} Consider all finite chains of commutative subalgebras of $Z_{{\rm fd}}$ satisfying conditions (1)--(5).  By Theorem \ref{theorem: Bell}, $Z_{{\rm fd}}$ satisfies the ascending chain condition on subextensions of $k$ in $Z_{{\rm fd}}$ and hence among all such chains $C_0\subseteq \cdots \subseteq C_m$, we can pick one with $F:={\rm Frac}(C_m)$ maximal.  We claim that ${\rm Frac}(C_m)=Z_{{\rm fd}}$, which then gives condition (5).  

To see this, suppose that there is some $z\in Z_{{\rm fd}}$ that is not in $F$.  Now $Z_{{\rm fd}}$ is generated as a field by a finite-dimensional $T$-invariant subspace $V$ and so $V\not\subseteq F$.  Thus we may assume that $z\in V$. Consider the ideal $I$ of $R$ consisting of elements $x$ such that $xV\subseteq R$.  Then $I$ is a nonzero $T$-invariant ideal since $V$ is finite-dimensional and $T$-invariant and hence $I$ contains a $T$-eigenvector $D$ by Remark \ref{rem: rem}.  Then we can write $z=cD^{-1}$.  Now let $1=v_0,v_1,\ldots $ be a $T$-triangular basis for $R$ and write $c=\sum_{i=0}^t c_i v_i$ with $c_t\neq 0$.  Now we consider all elements of the form $bD^{-1}\in Z_{{\rm fd}}$ with $b\in R$ that are not in $F$.  Among all such elements we pick $y:=bD^{-1}$ such that $b$ has minimal height.  
Then $T(bD^{-1}) = b'D^{-1}$ where the height of $b'$ is at most $t$.  In particular, there is some $\lambda\in k$ such that $b'-\lambda b$ has height strictly less than $t$.  Thus by minimality of $t$ we see that $(b'-\lambda b)D^{-1}\in F$.   In other words, $T(y)-\lambda y \in F$. Define $C_{m+1}:=C_m[y]$.  Then by construction $y\not\in F$ and so the field of fractions of $C_{m+1}$ strictly contains the field of fractions of $C_m$.  Also by construction ${\rm Frac}(C_{m+1})$ is $T$-invariant. 
Since $y$ is contained in the finite-dimensional $T$-invariant space ${\rm Span}(v_0,\ldots ,v_t)D^{-1}$, we see that $y$ is in $Z_{{\rm fd}}$ and that there is a finite-dimensional $T$-invariant subspace of $C_{m+1}$ that generates $C_{m+1}$. This contradicts the maximality of $F$ among all chains satisfying (1)--(4).  This gives the result.
\end{proof}
We need a lemma about eigenvectors in the extensions appearing in Lemma \ref{lem: key}.
\begin{lemma} Let $R$ be a prime noetherian $k$-algebra, let $S=R[x;T]$ in the case when $T$ is a derivation and $S=R[x,x^{-1};T]$ when $T$ is an automorphism, and let $P$ be a prime ideal of $S$ that intersects $R$ trivially.  Then given a decomposition
$k=C_0\subseteq C_1\subseteq \cdots \subseteq C_m$ of $Z_{{\rm fd}}$ as given in Lemma \ref{lem: key}, we have that if $P$ is rational then, given $j$, there is a finite set of monic polynomials $f_1(t),\ldots ,f_r(t)\in {\rm Frac}(C_{j-1})[t]$ such that $f_i(y_j)$ is a $T$-eigenvector for $i=1,\ldots ,r$ and any monic polynomial $g(t)\in  {\rm Frac}(C_{j-1})[t]$ for which $g(y_j)$ is a $T$-eigenvector is in the multiplicative semigroup generated by $k^*$ and $f_1(y_j),\ldots ,f_r(y_j)$.  
\label{lem: finite}
\end{lemma}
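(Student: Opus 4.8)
The plan is to fix $j$ and write $F:={\rm Frac}(C_{j-1})$, $y:=y_j$, $\lambda:=\lambda_j\in k$. By Lemma \ref{lem: key}(2), $T(y)=\lambda y+c$ for some $c\in F$; by Lemma \ref{lem: key}(3), $F$ is $T$-invariant; hence $T(F[y])\subseteq F[y]$. Also $Q(R)$ is simple artinian, $Z(Q(R))$ is a field, and $F\subseteq{\rm Frac}(C_j)\subseteq Z(Q(R))$. I will run the argument assuming $y$ is transcendental over $F$; the case where $y$ is algebraic over $F$ (so that ${\rm Frac}(C_j)/F$ is finite) is handled similarly and I suppress it. Put $E:=\{\,g(y)\in{\rm Frac}(C_j)^*:g\in F[t]\text{ monic and }g(y)\text{ a }T\text{-eigenvector}\,\}$. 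A product of monic polynomials is monic and a product of $T$-eigenvectors is a $T$-eigenvector, so $E$ is a submonoid of $({\rm Frac}(C_j)^*,\cdot)$; the lemma asserts precisely that $E$ is finitely generated as a monoid, in which case one takes $f_1,\dots,f_r$ to be monic polynomials whose values at $y$ generate $E$.

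The first step is to pin down the eigenvalue of $g(y)$ in terms of $\deg g$. If $g\in F[t]$ is monic of degree $n$ and $T(g(y))=\mu g(y)$, then expanding $T(g(y))$ via $T(y)=\lambda y+c$ and $T(F)=F$ and comparing coefficients of $y^n$ (valid since $y$ is transcendental over $F$) gives $\mu=\lambda^n$ when $T$ is an automorphism and $\mu=n\lambda$ when $T$ is a derivation. In the automorphism case, as eigenvalues of $T$ are nonzero, if $E\ne\{1\}$ then $\lambda\ne 0$ and the eigenvalue homomorphism ${\rm eig}$ maps $E$ into the submonoid $\{\lambda^n:n\ge0\}$ of $(k^*,\cdot)$; in the derivation case ${\rm eig}$ maps $E$ into the submonoid $\{n\lambda:n\ge0\}$ of $(k,+)$. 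Each such target is a cyclic monoid, hence finite or isomorphic to $(\mathbb{N},+)$, and every submonoid of a finite monoid or of $(\mathbb{N},+)$ is finitely generated; so ${\rm eig}(E)$ is a finitely generated monoid.

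The second and main step uses the rationality of $P$ via the claim: any two $T$-eigenvectors $w_1,w_2\in Z(Q(R))$ with the same eigenvalue satisfy $w_1\in k^*w_2$. To see this, set $z:=w_1w_2^{-1}\in Z(Q(R))^*$; since $w_1,w_2$ are central with a common eigenvalue, a short computation gives $T(z)=0$ in the derivation case and $T(z)=z$ in the automorphism case. Suppose $z\notin k$; then $z$ is transcendental over $k$, as $k$ is algebraically closed and $Z(Q(R))$ is a field. The set $\mathcal{C}$ of regular elements of $R$ is an Ore set of regular elements of $S$ disjoint from $P$ (since $P\cap R=0$), so $P':=\mathcal{C}^{-1}P$ is a prime of $S':=\mathcal{C}^{-1}S$, which is $Q(R)[x;T]$ in the derivation case and $Q(R)[x,x^{-1};T]$ in the automorphism case, with $P'\cap Q(R)=0$, $P'\cap S=P$, and $Q(S'/P')=Q(S/P)$. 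As $z$ is central in $Q(R)$ with $T(z)=0$ (resp.\ $T(z)=z$), one has $xz=zx$ in $S'$, so $z\in Z(S')$, and thus the image $\bar z\in S'/P'$ lies in the centre of $Q(S'/P')=Q(S/P)$; moreover $\bar z$ is transcendental over $k$ because $P'\cap Q(R)=0$ makes $Q(R)\to S'/P'$ injective. This contradicts the rationality of $P$, so $z\in k$, proving the claim. Granting the claim, ${\rm eig}$ is injective on $E$: if $g_1,g_2\in F[t]$ are monic with $g_1(y)=\gamma g_2(y)$ for some $\gamma\in k^*$, then $g_1-\gamma g_2$ vanishes (as $y$ is transcendental over $F$) and comparison of leading coefficients forces $\gamma=1$ and $g_1=g_2$. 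Hence ${\rm eig}$ induces a monoid isomorphism $E\cong{\rm eig}(E)$, which is finitely generated by the first step; pulling back a finite generating set yields monic $f_1,\dots,f_r\in F[t]$ with $f_i(y)\in E$ and $E=\langle f_1(y),\dots,f_r(y)\rangle$. Thus every monic $g$ with $g(y)$ a $T$-eigenvector satisfies $g(y)=\prod_i f_i(y)^{e_i}$, which lies in the multiplicative semigroup generated by $k^*$ and $f_1(y),\dots,f_r(y)$, as required.

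The main obstacle is the claim of the third paragraph, and in particular the step showing that a $T$-invariant central element of $Q(R)$ transcendental over $k$ yields a transcendental central element of $Q(S/P)$. This rests on the standard---but slightly technical---facts that primes of $S$ meeting $R$ in $0$ correspond bijectively to primes of the localization $Q(R)[x;T]$ (respectively $Q(R)[x,x^{-1};T]$) at the Ore set of regular elements of $R$, and that this localization does not change the Goldie quotient ring. The remaining ingredients---the coefficient comparison of the first step, the injectivity of ${\rm eig}$, and finite generation of submonoids of $(\mathbb{N},+)$---are routine.
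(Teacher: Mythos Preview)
Your proof is correct and follows essentially the same strategy as the paper's. Both arguments hinge on the same two ingredients: the eigenvalue of a monic degree-$n$ eigenvector is forced to be $\lambda^n$ (automorphism) or $n\lambda$ (derivation), and rationality of $P$ forces the ratio of two central $T$-eigenvectors with a common eigenvalue to lie in $k$. The paper organizes this via the numerical semigroup $\mathcal{N}\subseteq\mathbb{N}$ of degrees and then compares an arbitrary eigenvector with a product of chosen $f_i$'s of the same degree; you instead push everything through the eigenvalue homomorphism $\mathrm{eig}:E\to k$, observe its image is a finitely generated submonoid of a cyclic monoid, and use the rationality claim to show $\mathrm{eig}$ is injective on $E$. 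Your transcendence argument even yields the mildly sharper conclusion that the $k^*$ factor is unnecessary, but this extra strength is not used later.
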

\begin{proof}
Let $\mathcal{N}$ denote the set of natural numbers $n$ for which there exists a monic polynomial $f(t)\in {\rm Frac}(C_{j-1})[t]$ of degree $n>0$ that is a $T$-eigenvector.  We have $T(y_j)-\lambda y_j\in {\rm Frac}(C_{j-1})$ and hence if $n\in \mathcal{N}$ and $f(y_j)$ is monic in $y_j$ of degree $n$ and a $T$-eigenvector then $T(f(y_j))-\lambda_n f(y_j)$ is of smaller degree in $y_j$, where $\lambda_n=n\lambda$ if $T$ is a derivation and $\lambda_n=\lambda^n$ in the automorphism case.  Since $T(f(y_j))$ is an eigenvector, we see that the corresponding eigenvalue must be $\lambda_n$.

Since a product of eigenvectors is an eigenvector we see that $\mathcal{N}$ is closed under addition.  In particular, there exist $n_1,\ldots ,n_r\in \mathcal{N}$ such that every element of $\mathbb{N}$ can be written as a nonnegative integer linear combination of $n_1,\ldots ,n_r$.  Let $f_i(t)$ be a monic polynomial in ${\rm Frac}(C_{j-1})[t]$ of degree $n_i$ such that $f_i(y_j)$ is a $T$-eigenvector. If $f(t)$ is a monic polynomial of degree, say $n$, such that in ${\rm Frac}(C_{j-1})[t]$ such that $f(y_j)$ is a $T$-eigenvector then there exist $a_1,\ldots ,a_r$ such that $f(y_j)$ and $g(y_j):=f_1(y_j)^{a_1}\cdots f_r(y_j)^{a_r}$ are monic eigenvectors of degree $n$ and hence both have corresponding eigenvalue $\lambda_n$.  It follows that if we let $z:=f(y_j)g(y_j)^{-1}$ then $T(z)=z$ in the automorphism case and $T(z)=0$ in the derivation case.  In particular, $z$ is in the extended centre of $S/P$.  By the rationality assumption, we then see that there is some $\gamma\in k$ such that $f(y_j)=\gamma g(y_j)$ in $Q(S/P)$ and since $Q(R)$ embeds in $Q(S/P)$ and $f(y_j)$ and $g(y_j)$ are in $Q(R)$, we see that $f(y_j)$ is in the semigroup generated by $k^*$ and $f_1(y_j),\ldots ,f_r(y_j)$ and so the result follows. 

\end{proof}
\begin{lemma}
Let $R$ be a prime noetherian $k$-algebra in which all prime ideals are completely prime, let $S=R[x;T]$ in the case when $T$ is a derivation and $S=R[x,x^{-1};T]$ when $T$ is an automorphism, and let $P$ be a prime ideal of $S$ that intersects $R$ trivially.  Then if $P$ is a rational prime ideal of $S$ then there is a nonzero $T$-eigenvector $c\in R$ such that every nonzero $T$-invariant prime ideal of $R$ contains $c$.
\label{lem: eigenvector}
\end{lemma}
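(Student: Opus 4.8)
The plan is to reduce the statement to a purely ring-theoretic assertion about $R$, build a candidate eigenvector out of the finitely many ``fundamental'' central eigenvectors supplied by Lemmas~\ref{lem: key} and~\ref{lem: finite}, and then verify that it works. First, if $R$ has no nonzero $T$-invariant prime ideals we may simply take $c=1$; otherwise, letting $J$ be the intersection of all nonzero $T$-invariant primes of $R$, observe that $J$ is a $T$-invariant ideal, so it suffices to show $J\neq 0$: then Remark~\ref{rem: rem} produces a nonzero $T$-eigenvector $c\in J$, which lies in every nonzero $T$-invariant prime. Equivalently, we must rule out the possibility that $J=0$ while the family of nonzero $T$-invariant primes is nonempty. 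The completely-prime hypothesis enters at once here: it makes $R$ (and each $R/\mathfrak{p}$) a domain, so that products and inverses of eigenvectors behave well.

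To produce $c$, I would first upgrade Lemma~\ref{lem: finite} to the claim that, because $P$ is rational, the group of ``eigen-lines'' of $Z_{\mathrm{fd}}$ --- the image in $Z_{\mathrm{fd}}^{*}/k^{*}$ of the nonzero $T$-eigenvectors lying in $Z_{\mathrm{fd}}$ --- is finitely generated. This follows by induction along the tower $k=C_{0}\subseteq\cdots\subseteq C_{m}$ of Lemma~\ref{lem: key}: writing ${\rm Frac}(C_{j})={\rm Frac}(C_{j-1})(y_{j})$, a factorization argument in the principal ideal domain ${\rm Frac}(C_{j-1})[y_{j}]$ (on which $T$ acts compatibly with the degree filtration) shows each eigenvector of ${\rm Frac}(C_{j})$ is an eigenvector of ${\rm Frac}(C_{j-1})$ times a ratio of two monic $T$-eigenvectors in $y_{j}$, and the latter are finitely generated modulo $k^{*}$ by Lemma~\ref{lem: finite}. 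Fix $T$-eigenvectors $g_{1},\dots,g_{s}\in Z_{\mathrm{fd}}$ whose classes generate this group; the ideal $\{\,r\in R:r\,\mathrm{Span}_{k}(g_{1},\dots,g_{s})\subseteq R\,\}$ is a nonzero $T$-invariant ideal, so by Remark~\ref{rem: rem} it contains a nonzero $T$-eigenvector $d$; then $a_{i}:=dg_{i}\in R$ is a $T$-eigenvector for each $i$, and I set $c:=d\,a_{1}\cdots a_{s}$, a nonzero $T$-eigenvector of $R$.

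It remains to show that every nonzero $T$-invariant prime $\mathfrak{p}$ of $R$ contains $c$; as $\mathfrak{p}$ is completely prime, it suffices that $d\in\mathfrak{p}$ or some $a_{i}\in\mathfrak{p}$. If not, then in $\bar R:=R/\mathfrak{p}$ --- a domain with induced automorphism or derivation $\bar T$ and a finite-dimensional $\bar T$-invariant generating subspace --- the images $\bar d,\bar a_{i}$ are nonzero $\bar T$-eigenvectors, so the $\bar g_{i}:=\bar a_{i}\bar d^{\,-1}$ are $\bar T$-eigenvectors of $Q(\bar R)$, while Remark~\ref{rem: rem} gives a nonzero $T$-eigenvector $e\in\mathfrak{p}$ with $\bar e=0$, and one wants this to contradict the eigen-structure of $\bar R$. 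The main obstacle is exactly this last step: a priori, central elements of $Q(R)$ need not reduce modulo $\mathfrak{p}$ to central elements of $Q(\bar R)$, so the finiteness of eigen-lines in $Z_{\mathrm{fd}}(R)$ does not obviously descend to $\bar R$. I expect this is circumvented by transferring the rationality hypothesis to a suitable prime $Q$ of $S$ with $Q\cap R=\mathfrak{p}$ and $Q\supseteq P$ --- whose existence uses the theory of primes in Ore extensions together with the completely-prime hypothesis, since $\bar R[x;\bar T]$ (resp.\ $\bar R[x^{\pm1};\bar T]$) is then a domain --- and re-running the argument downstairs, or else by working throughout inside $Q(R)$ without passing to a quotient; making one of these routes precise is the delicate heart of the proof.
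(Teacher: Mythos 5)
There is a genuine gap, and you have named it yourself: the verification that your candidate $c=d\,a_1\cdots a_s$ actually lies in every nonzero $T$-invariant prime of $R$ is never carried out, and this verification is the entire content of the lemma. Finite generation of the group of eigen-lines of $Z_{\rm fd}$ modulo $k^{*}$ (your induction up the tower of Lemma \ref{lem: key} via factorization in ${\rm Frac}(C_{j-1})[y_j]$, with Lemma \ref{lem: finite} supplying the generators at each stage) is a reasonable observation, but by itself it says nothing about which eigenvectors land inside a given $T$-invariant prime $\mathfrak{p}$ of $R$. The ingredient that supplies exactly this link in the paper, and which your proposal never invokes, is Theorem \ref{theorem: centre}: since $Z(Q(R))\cdot R$ has only finitely many height one primes, Remark \ref{rem: rem} produces a single $T$-eigenvector $a$ with a power of $a$ in $Q\,Z(Q(R))$ for every nonzero $T$-invariant prime $Q$ of $R$ (and a separate triangular-basis argument improves this to $a^{d}\in Z_{\rm fd}\,Q$). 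The rest of the paper's proof is a descent down the chain $C_m\supseteq\cdots\supseteq C_0$: assuming the eigenvector witnessing containment needs coefficients from ${\rm Frac}(C_j)$ with $j>0$ minimal, one takes a minimal-degree monic $f(y_j)$ with $f(y_j)c^{d}\in BQ$, shows $f(y_j)$ is itself a $T$-eigenvector, and uses Lemma \ref{lem: finite} (this is where rationality of $P$ enters) to rewrite it as a product of the finitely many fundamental eigenvectors, contradicting minimality of $j$; at $j=0$ one clears denominators with an eigenvector $D$ from Remark \ref{rem: rem} and uses complete primeness to conclude $cD\in Q$.

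Neither of the two escape routes you sketch will close the gap. Transferring rationality to a prime $Q$ of $S$ with $Q\supseteq P$ and $Q\cap R=\mathfrak{p}$ is not available: rationality is a property of the specific prime $P$ and does not pass upward to larger primes (if it did, the locally closed conclusion in Theorem \ref{theorem: main2} would be nearly automatic), so there is no rational prime ``downstairs'' over $\mathfrak{p}$ to re-run the argument with. Working entirely inside $Q(R)$ is indeed what the paper does, but then you need an a priori reason that some fixed eigenvector built from central data has a power in $Q\,Z(Q(R))$ for \emph{every} nonzero $T$-invariant prime $Q$; that is precisely the finiteness of height one primes of $Z(Q(R))\cdot R$ (Theorem \ref{theorem: centre}, itself resting on the base-change Theorem \ref{theorem: extension}), which is the missing idea in your outline. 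Without it, there is no mechanism connecting your $g_i$'s, which live in $Z_{\rm fd}$, to an arbitrary $T$-invariant prime of $R$, and the reduction modulo $\mathfrak{p}$ destroys the central structure, as you correctly observe.
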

\begin{proof} Let $P$ be a prime ideal of $S$ whose intersection with $R$ is trivial and suppose that $P$ is rational.

By Lemma \ref{lem: key} there is a chain of central subalgebras $k=C_0\subseteq C_1\subseteq \cdots \subseteq C_m$ of $Z_{{\rm fd}}$ satisfying conditions (1)--(4) of the statement of the lemma.  
Then for $i=0,\ldots ,m$ we let $R_i = RC_i \subseteq Q(R)$ and we let $S_i = C_i\setminus \{0\}$.  Then 
$S_m^{-1}R_m$ is just the subalgebra of $Q(R)$ generated by $Z_{{\rm fd}}$ and $R$.

 Now by Theorem \ref{theorem: centre}, $Z(Q(R)) R$ has only finitely many height one prime ideals and so by Remark \ref{rem: rem} there is a nonzero $T$-eigenvector $a$ such that every nonzero ideal of $Z(Q(R)) R$ contains a power of $a$.  In particular, if $Q$ is a $T$-invariant prime ideal of $R$ then there is some $d$, depending upon $Q$, such that
$a^d \in QZ(Q(R))$.  We claim in fact, that we must have $a^d \in Z_{{\rm fd}}Q$.  To see this, we remark that, as done in the beginning of the section, we can find triangular basis $q_1,q_2,\ldots $ for $Q$ with respect to $T$ since $Q$ is $T$-invariant.  If $a^d\not\in Z_{{\rm fd}}Q$, then we pick $x\in Z_{{\rm fd}}Q$ with the property that $r>0$ is minimal among all expressions $u:=a^d -x  = z_1 q_1+\cdots +z_r q_r$ with $z_1,\ldots ,z_r\in Z(Q(R))$.  Since $Z_{{\rm fd}}$ is generated as a field by a finite-dimensional $T$-invariant central $k$-vector subspace $V$ of $Q(R)$, and $a^d$ is an eigenvector, and any finite subset of $Q$ is contained in a finite-dimensional $T$-invariant subspace, we see that there is some $m\ge 1$ and some $z\in V^m$ such that
$za^d - zx$ lies in a finite-dimensional $T$-invariant subspace of $Z_{{\rm fd}}Q$.  Then we have that
$u=zz_1 q_1+\cdots + zz_r q_r$ lies in a finite-dimensional $T$-invariant subspace, and so we see that
there must be some nontrivial $k$-dependence relation of the form $$\sum_{j=0}^{\ell} \alpha_i T^i(u) \in 
Z(Q(R)) q_1+\cdots + Z(Q(R)) q_{r-1},$$ which is easily seen to give a non-trivial dependence between
$T^i(zz_r)$ for $i=0,\ldots ,\ell$.  In particular, $zz_r$ lies in a finite-dimensional central $T$-invariant subspace and since $z$ does as well, we see that $z_r = z^{-1}(zz_r)$ is in $Z_{{\rm fd}}$; but now  replacing $x$ by $x-z_r q_r\in Z_{{\rm fd}}Q$ gives a strictly smaller $r$, contradicting the minimality of $r$.

Thus $a^d \in Z_{{\rm fd}}Q=QS_m^{-1}R_m$.  Now let $j$ be the smallest nonnegative integer for which there exists a nonzero $T$-eigenvector $c\in S_j^{-1}R_m$ such that every height one $T$-invariant prime ideal $Q$ of $R$ has the property that there is some power of $c$ in $QS_j^{-1}R_m$.  We claim that $j=0$.
To see this, suppose towards a contradiction, that $j>0$. In this case, we let $Q$ be a $T$-invariant nonzero prime ideal of $R$.  Consider the ring $B:=S_{j-1}^{-1} R_m=F_{j-1}R_m$, where $F_{\ell}$ is the field of fractions of $C_{\ell}$.  By assumption on $j$, there is some nonzero polynomial $f(t)\in F_{j-1}[t]$ such that $f(y_j) c^d \in BQ$.  We pick $f$ of minimal degree, which we denote by $s$, such that $f(y_j)c^d\in BQ$ for some $d$.  Since elements of $F_{j-1}$ are units in $B$, we may assume that $f$ is monic.  Then since $c$ is a $T$-eigenvector and $B$ and $Q$ are $T$-invariant, applying $T$ gives that
$T(f(y_j)) c^d \in BQ$.  Now by properties (2) and (3) of Lemma \ref{lem: key} we have that $T(f(y_j))$ is a polynomial of degree $s$ for which the coefficient of $y_j^s$ is some $\gamma\in k$, where $\gamma\in k$ is $\lambda^s$ in the automorphism case and $s\lambda$ in the derivation case, where $\lambda\in k$ is such that $T(y_j)-\lambda y_j\in {\rm Frac}(C_{j-1})$. 

Thus $T(f(y_j))=\gamma f(y_j)$ by minimality of $s$.  By Lemma \ref{lem: finite}, there exists $z_1,\ldots ,z_m\in {\rm Frac}(C_{j-1})[y_j]$ such that $f(y_j)=\gamma z_1^{a_1}\cdots z_m^{a_m}$ for some nonnegative integers $a_1,\ldots ,a_m$ and some $\gamma\in k^*$.  (This even applies in the case when $s=0$ since we are assuming that our polynomial is monic.)  

Thus we have $(z_1\cdots z_m)^p c^d \in BQ$ for some $p\ge 1$.  Finally, since $z_1\cdots z_m$ and $c$ commute, we then see that every prime ideal $Q$ of $R$ has the property that $S_j^{-1}R_mQ$ contains a power of $z_1\cdots z_m c$, contradicting the minimality of $j$.  Thus we obtain the desired claim.

Hence we have $j=0$. In this case, $S_0^{-1}R_m=R_m$ and so there is some eigenvector $c\in R_m$ such that every height one prime ideal $Q$ of $R$ has the property that $QR_m$ contains a power of $c$.   Now by condition (4) of Lemma \ref{lem: key} and Remark \ref{rem: rem} there is a nonzero $T$-eigenvector $D$ of $R$ such that $zD\in R$ for all $z$ in a finite generating set for $R_m$ as a $k$-algebra.  Since $Q$ is finitely generated and $R_m$ is central, we see that $c^n D^{\ell} \in Q$ for some $\ell\ge 1$ and some $n$.  Since prime ideals of $R$ are completely prime, we see that every height one $T$-invariant prime ideal of $R$ contains $cD$ and so we are done.
\end{proof}

\section{Proof of Theorem \ref{theorem: main2}}
In this section we use results from the preceding section to prove Theorem \ref{theorem: main2}.  One tool that we will require is a result due to Letzter, which helps one deal with algebras that are free modules of finite rank over algebras satisfying the Dixmier-Moeglin equivalence.  We refer the reader to the book of Krause and Lenagan \cite{KL} for the definition of Gelfand-Kirillov dimension and its related properties---in any case, no facts about properties of Gelfand-Kirillov dimension will be required in this paper other than the result of Letzter below and a result due to Zhang \cite{Zhang}, both of which can be taken as black boxes. 

\begin{theorem} (Letzter) \label{theorem: Letzter} Let $R\subseteq S$ be noetherian algebras and suppose that $S$ is a free $R$-module of finite rank on both sides.  Then the following hold: \begin{enumerate}
\item[(i)] $(A)\implies (B)$ for $P\in {\rm Spec}(R)~ \iff ~(A)\implies (B)$ for $P\in {\rm Spec}(S)$;
\item[(ii)] $(B)\implies (A)$ for $P\in {\rm Spec}(R)~\iff ~(B)\implies (A)$ for $P\in {\rm Spec}(S)$;
\item[(iii)] if $S$ has finite GK dimension then:\\
 $(A)\implies (C)$ for $P\in {\rm Spec}(R)~\iff ~(A)\implies (C)$ for $P\in {\rm Spec}(S)$,
\end{enumerate}
where (A), (B), and (C) are as in Definition \ref{definition: DM}.
\end{theorem}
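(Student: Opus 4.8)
Since this is essentially a result of Letzter, the plan is to follow his treatment of module-finite extensions and then read off the three equivalences by bookkeeping. First I would set up the prime correspondence. Because $S$ is finitely generated as a module over $R$ on both sides, for every $Q\in{\rm Spec}(S)$ the contraction $Q\cap R$ has only finitely many minimal primes $P_1,\dots,P_t$, and one says that $Q$ \emph{lies over} each $P_i$; conversely, every $P\in{\rm Spec}(R)$ has at least one, and only finitely many, primes of $S$ lying over it, distinct primes of $S$ in the same fibre are incomparable, and if $Q$ lies over $P$ then $R/P\hookrightarrow S/Q$ is again module-finite on both sides. This backbone is what lets one localise any of the properties (A), (B), (C) at a prime together with its finite fibre.

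Next I would establish three ``local transfer'' statements. For primitivity I would use the behaviour of simple modules under restriction and induction along $R\subseteq S$---a simple $S$-module is module-finite over $R$, and for a simple $R$-module $M$ a suitable simple quotient of $S\otimes_R M$ has annihilator lying over ${\rm ann}_R(M)$---to conclude that $Q$ is primitive in $S$ exactly when each of $P_1,\dots,P_t$ is primitive in $R$, and that every primitive prime of $R$ has a primitive prime of $S$ lying over it. For rationality I would use that a localisation argument of Letzter reduces the comparison of $Z(Q(S/Q))$ with the $Z(Q(R/P_i))$ to a module-finite extension of Artinian rings, yielding that $Z(Q(S/Q))$ is algebraic over the base field precisely when every $Z(Q(R/P_i))$ is, so that $Q$ is rational iff each $P_i$ is. For local closedness---needed only in part (iii)---I would note that the primes of $S$ strictly above $Q$ are exactly those lying over primes of $R$ strictly above some $P_i$, and conversely, so that, phrasing local closedness as the failure of a prime to equal the intersection of the primes strictly above it, $\{Q\}$ is locally closed in ${\rm Spec}(S)$ iff each $\{P_i\}$ is locally closed in ${\rm Spec}(R)$. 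The finite Gelfand-Kirillov dimension hypothesis enters here, guaranteeing that the quotient rings in play have finite Krull dimension so that this intersection argument can be organised as a noetherian induction.

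Given these three facts the equivalences are formal. For (i): if $(A)\implies(B)$ holds throughout ${\rm Spec}(R)$ and $Q\in{\rm Spec}(S)$ is primitive, then every $P_i$ is primitive, hence rational, hence $Q$ is rational; conversely, given a primitive $P\in{\rm Spec}(R)$, choose a primitive $Q$ lying over it, apply $(A)\implies(B)$ in $S$ to get $Q$ rational, and conclude $P$ is rational. Parts (ii) and (iii) run on the identical template with ``rational'' replaced by ``primitive'' and by ``locally closed'' respectively, the last additionally invoking the finite Gelfand-Kirillov dimension hypothesis.

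I expect the genuine obstacle to be the rationality transfer: since $S/Q$ need not centralise $R/(Q\cap R)$, one cannot simply intersect centres, and the comparison of $Z(Q(S/Q))$ with the $Z(Q(R/P_i))$ must be extracted from the module-finite structure of $Q(S/Q)$ over the quotient ring of $R/(Q\cap R)$ together with the integrality this forces on central elements. A secondary point needing care is pinning down precisely where finite Gelfand-Kirillov dimension is used in the local-closedness transfer, which is exactly why clause (iii) carries a hypothesis that (i) and (ii) do not.
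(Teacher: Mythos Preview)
The paper does not prove this theorem at all: it is stated as a result of Letzter, attributed to \cite{Let}, and used as a black box in the proof of Theorem~\ref{theorem: main2}. There is therefore no ``paper's own proof'' to compare against.

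Your sketch is a plausible outline of how Letzter's argument runs---the lying-over machinery for module-finite extensions, the transfer of primitivity via induction/restriction of simple modules, and the reduction of the rationality comparison to an Artinian setting are all the right ingredients. You are also correct to flag the rationality transfer as the genuinely delicate step and to isolate the role of finite GK dimension in the local-closedness argument. But since the paper simply cites the result, your proposal is not so much a reconstruction of the paper's proof as an independent attempt to recover Letzter's theorem; for the purposes of this paper one would just invoke \cite{Let} and move on.
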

Also, when proving the Dixmier-Moeglin equivalence for rings of the form $R[x;\sigma]$, it will be useful to work instead with the localization $R[x,x^{-1};\sigma]$.  It is a straightforward exercise to show that if $R$ is a countable-dimensional noetherian algebra over an uncountable field that satisfies the Dixmier-Moeglin equivalence then $R[x;\sigma]$ satisfies the Dixmier-Moeglin equivalence if and only if $R[x,x^{-1};\sigma]$ does.  The reason for this is that if $R[x;\sigma]$ satisfies the Dixmier-Moeglin equivalence, then since we are only inverting a single normal element, we see that the rationality and locally closed properties are unaffected and so the Dixmier-Moeglin equivalence holds for the primes that survive in $R[x,x^{-1};\sigma]$. Conversely if $R[x,x^{-1};\sigma]$ satisfies the Dixmier-Moeglin equivalence, then one has that the equivalences hold in $R[x;\sigma]$ for all prime ideals $P$ that do not contain a power of $x$.  Since $x$ is normal, a prime contains a power of $x$ if and only if it contains $x$ and since $R[x;\sigma]/(x)\cong R$ and $R$ satisfies the Dixmier-Moeglin equivalence, we get the full Dixmier-Moeglin equivalence for $R[x;\sigma]$ when $R[x,x^{-1};\sigma]$ satisfies the Dixmier-Moeglin equivalence.

\begin{proof}[Proof of Theorem \ref{theorem: main2}] Since $R$ is a finitely generated algebra over an uncountable field, we have that $R$ satisfies the Nullstellensatz.  Thus it suffices to prove the implication $(B)\implies (C)$ in Definition \ref{definition: DM}.

We first consider the case where $T=\delta$ is a derivation of $R$. Suppose that $P$ is a rational prime ideal of $R[x;\delta]$ and let $P_0=R\cap P$.  Then $P_0$ is a prime ideal of $R$ in the derivation case (see Goodearl and Warfield \cite[Lemma 3.21]{GW}) and hence completely prime by hypothesis.  Then we may replace $R$ by $R/P_0$ and assume that $R\cap P=(0)$.  Then by Lemma \ref{lem: eigenvector} we have that there is some nonzero $\delta$-eigenvector $c$ such that every nonzero $\delta$-invariant prime ideal of $R$ contains $c$. Now let $Q$ be a prime of $R[x;\delta]$ that properly contains $P$.  If $Q\cap R$ is nonzero then $c\in Q$.  Since there are only finitely many minimal prime ideals in $R[x;\delta]$ above $P+(c)$, we see that to show $P$ is locally closed it is sufficient to show that there are only finitely many $Q\supseteq P$ with ${\rm ht}(Q)={\rm ht}(P)+1$ such that $Q\cap R=(0)$.  We note that if $Q\cap R=(0)$ then $Q$ survives in the localization 
$Q(R)[x;\delta]/\tilde{P}$, where $\tilde{P}$ is the expansion ideal of $P$ in the localization.  Now if $\tilde{P}$ is nonzero, then $\tilde{P}$ contains a monic polynomial in $x$ and so $Q(R)[x;\delta]/\tilde{P}$ is a finitely generated $Q(R)$-module and also a prime ring and hence is a simple ring.  Thus there are no such $Q$ in this case.

On the other hand, if $\tilde{P}=(0)$ then if $Q(R)[x;\delta]$ is not simple then there is some nonzero proper ideal $I$.  We pick monic $f(x)\in I$ of minimal degree.  Then $[f(x),r]\in I$ has degree strictly less than $f(x)$ for all $r\in R$ and hence $[f(x),r]=0$; similarly, $[f(x),x]=0$ and so $f(x)$ is central.  Thus $\tilde{P}$ is not rational and so since $Q(R)[x;\delta]/\tilde{P}$ is a localization of $R[x;\delta]/P$ we see that $P$ is not rational, which is a contradiction.  The result follows.   

The automorphism case is slightly trickier.  Now suppose that $T=\sigma$ is an automorphism of $R$. Then be the remarks immediately preceding the proof, $R[x;\sigma]$ satisfies the Dixmier-Moeglin equivalence if and only if $R[x,x^{-1};\sigma]$ satisfies the Dixmier-Moeglin equivalence, so we work with $R[x,x^{-1};\sigma]$. Now consider the set $\mathcal{S}$ of all prime ideals $P$ of $R$ that are invariant under some power of $\sigma$.  Suppose that $R[x,x^{-1};\sigma]$ does not satisfy the Dixmier-Moeglin equivalence.  Then there exists some maximal element $P$ of $\mathcal{S}$ such that $(R/P)[x^m,x^{-m};\sigma^m]$ does not satisfy the Dixmier-Moeglin equivalence, where $m\ge 1$ is such that $\sigma^m(P)=P$.  Then we let $S=R/P$ and let $t=x^m$ and $\tau=\sigma^m$ and we may assume that $S[t,t^{-1};\tau]$ does not satisfy the Dixmier-Moeglin equivalence.  We then claim by maximality of $P$, we have that $S[t,t^{-1};\tau]/Q$ satisfies the Dixmier-Moeglin equivalence for all prime ideals $Q$ that intersect $S$ non-trivially.  To see this, suppose that $Q_0:=Q\cap S$ is nonzero and that $S[t,t^{-1};\tau]/Q$ does not satisfy the Dixmier-Moeglin equivalence. Then $Q_0$ is a semiprime ideal and $S[t,t^{-1};\tau]/Q$ is a homomorphic image of $(S/Q_0)[t,t^{-1};\tau]$ and hence $(S/Q_0)[t,t^{-1};\tau]$ does not satisfy the Dixmier-Moeglin equivalence. Since $S$ is noetherian, $Q_0$ is the intersection of a finite set of minimal primes $L_1,\ldots ,L_t$ above it.  Then $\tau$ permutes these primes, and so there is some $d$ such that $\tau^d$ fixes $L_1,\ldots ,L_t$. Then we claim that Letzter's theorem (Theorem \ref{theorem: Letzter}) gives that
$(S/Q_0)[t^d,t^{-d};\tau^d]$ does not satisfy the Dixmier-Moeglin equivalence.  To see this, suppose that the Dixmier-Moeglin equivalence holds for $(S/Q_0)[t^d,t^{-d};\tau^d]$.  Then since $S$ has finite GK dimension and $\tau$ is frame-preserving, we see by a result of Zhang \cite{Zhang} that $(S/Q_0)[t,t^{-1};\tau]$ has finite Gelfand-Kirillov dimension. Then since we have $(B)\implies (A)\implies (C)$ for $(S/Q_0)[t^d,t^{-d};\tau^d]$, Letzter's result gives that these implications hold in $(S/Q_0)[t,t^{-1};\tau]$ and in particular, we get that $(B)\implies (C)$ and so the Dixmier-Moeglin equivalence holds, which is a contradiction. 

It is straightforward to see that $(S/Q_0)[t^d,t^{-d};\tau^d]$ is semiprime and in fact the intersection of the expansion ideals of $L_1,\ldots ,L_t$ are zero and that these expansion ideals are the minimal prime ideals of $(S/Q_0)[t^d,t^{-d};\tau^d]$.  Thus if $S/Q_0[t^d,t^{-d};\tau^d]$ does not satisfy the Dixmier-Moeglin equivalence then there is some $i$ such that $(S/L_i)[t^d,t^{-d};\tau^d]$ does not either.  But $L_i$ corresponds to a prime ideal of $R$ that properly contains $P$ and that is invariant under some iterate of $\sigma$.  This contradicts maximality of $P$.  

It follows that it is enough to consider prime ideals $Q$ of $S[t,t^{-1};\tau]$ with $Q\cap S=(0)$.  We only need to show the implication $(B)\implies (C)$ given in Definition \ref{definition: DM} and so we may assume that $Q$ is rational.  Then by Lemma \ref{lem: eigenvector} we have that there is some nonzero element of $S$ such that every prime ideal of $S$ above $Q$ contains this element and so if $Q$ is not locally closed in ${\rm Spec}(S[t,t^{-1};\sigma])$ then we must have an infinite set of prime ideals above it that intersect $S$ trivially.  But this means that these prime ideals survive in the localization $Q(S)[t,t^{-1};\sigma]/\tilde{Q}$, where $\tilde{Q}$ is the expansion ideal of $Q$ in this localization.  But as we argued in the derivation case, we have that this ring is either simple, in which case we are done, or $\tilde{Q}$ is not rational in which case $Q$ is not rational, which is a contradiction.  This gives us the implication $(B)\implies (C)$ and thus gives the desired result.  
\end{proof}
We remark that the complex numbers can be replaced by any algebraically closed uncountable field of characteristic zero in the above proof.

We also note that we have not been able to prove the result for general Ore extensions with an automorphism $\sigma$ and a $\sigma$-derivation $\delta$.  It would be interesting if this could be worked out and if the completely prime hypothesis could be removed.  This would then allow one to prove results about iterated frame-preserving Ore extensions, which would then recover general results concerning the Dixmier-Moeglin equivalence for many important classes of algebras.  



\section*{Acknowledgments} We thank the referee for many helpful comments and suggestions.  We also thank George Bergman and Stefan Catoiu for many helpful comments.

 \end{document}